\documentclass[11pt]{article}

\usepackage[letterpaper,margin=1in]{geometry}
\usepackage{amsmath}
\usepackage{amsthm}
\usepackage[colorlinks=true,allcolors=blue]{hyperref}
\usepackage{cleveref}

\theoremstyle{plain}
\newtheorem{theorem}{Theorem}[section]
\newtheorem{lemma}[theorem]{Lemma}
\newtheorem{corollary}[theorem]{Corollary}
\newtheorem{proposition}[theorem]{Proposition}

\numberwithin{equation}{section}

\newcommand{\newsiamremark}[2]{%
  \theoremstyle{remark}\newtheorem{#1}[theorem]{#2}\theoremstyle{plain}}
\newcommand{\headers}[2]{}
\newenvironment{keywords}%
  {\par\medskip\noindent\textbf{Key words. }}{\par\medskip}
\newenvironment{MSCcodes}%
  {\par\noindent\textbf{MSC codes. }}{\par\medskip}

\usepackage[english]{babel}
\usepackage{amsfonts,amssymb}
\usepackage{mathtools}
\usepackage{graphicx}
\usepackage{enumitem}
\usepackage{soul}
\usepackage{color}

\renewcommand{\underline}[1]{\ul{#1}}

\newsiamremark{remark}{Remark}
\newsiamremark{example}{Example}

\DeclareMathOperator{\Lip}{Lip}
\DeclareMathOperator{\argmin}{arg\,min}
\DeclareMathOperator{\argmax}{arg\,max}

\definecolor{darkblue}{rgb}{0.1,0.1,0.6}

\def\bbR{\mathbb{R}}

\def\eps{\epsilon}
\def\dd{\mathrm{d}}
\def\cP{\mathcal{P}}
\def\weakstarto{\overset{\ast}{\rightharpoonup}}
\def\cE{\mathcal{E}}
\def\Glim{\Gamma\mathrm{-}\lim}

\setlist[enumerate]{leftmargin=.5in}
\setlist[itemize]{leftmargin=.5in}

\headers{Sandwich Slice}{R. D\'iaz Mart\'in, X. Liu, M. Thorpe, and S. Kolouri}

\title{Sandwich Slice: Optimal Transport Potentials Are Optimal Generalized Slicers
}

\author{Roc\'io D\'iaz Mart\'in\thanks{Department of Mathematics, Florida State University.}
\and Xinran Liu\thanks{Harvard T.H. Chan School of Public Health}
\and Matthew Thorpe\thanks{Department of Statistics, University of Warwick.}
\and Soheil Kolouri\thanks{College of Connected Computing, Vanderbilt University.}}

\hypersetup{
  pdftitle={Sandwich Slice: Optimal Transport Potentials Are Optimal Generalized Slicers},
  pdfauthor={Rocio Diaz Martin, Xinran Liu, Matthew Thorpe, and Soheil Kolouri}
}

\begin{document}

\maketitle

\begin{abstract}
Sliced optimal transport replaces a transport problem in $\mathbb{R}^d$ by
one-dimensional problems along scalar slices, and has been used both to lower
bound the Wasserstein distance $W_1$ (max-sliced distances) and, more recently,
to upper bound it by lifting a one-dimensional optimal plan back to the ambient
supports (min-sliced plans). We show that for $1$-Lipschitz slicers $f$ these two
directions are two sides of a single sandwich, $L(f) \le W_1 \le 
U(f)$, 
and we characterize when it collapses to equality.
If the slicer class contains an
optimal Kantorovich--Rubinstein (KR) potential $\phi^*$, then $L(\phi^*) = W_1 =
U(\phi^*)$: the \emph{same} slicer simultaneously maximizes the
max-sliced lower objective and minimizes the min-sliced upper objective, and
both equal $W_1$. 
We define the upper objective through a tie-broken transport
problem, 
which removes the dependence on the one-dimensional ordering and
yields the identities with no uniqueness assumption on the slice-optimal
coupling. 
The result requires the slicer to be nonlinear in general,
consistent with the known failure of exact linear max-sliced/$W_1$ equivalence
in dimension $d \ge 2$, and identifies KR potentials as optimal generalized
slicers.
\end{abstract}



\begin{keywords}
Optimal transport, Wasserstein distance, sliced Wasserstein distance, Kantorovich--Rubinstein duality, generalized slicers
\end{keywords}

\begin{MSCcodes}
49Q22, 60B05, 68T07
\end{MSCcodes}


\section{Introduction}
\label{sec:intro}

Optimal transport (OT) provides a geometrically faithful way to compare
probability measures, but its computational cost in high dimensions has long
motivated cheaper surrogates. One of the most prominent of these is the
\emph{sliced} family: rather than solving a transport problem in $\mathbb{R}^d$,
one projects both measures onto the real line through a one-dimensional map,
solves the resulting (closed-form) one-dimensional OT problem, and aggregates
over a collection of such maps. With linear projections, this yields the sliced
Wasserstein (SW) distance of Rabin et al.~\cite{rabin2012wasserstein}, also studied by Bonnotte in his doctoral thesis~\cite{Bonnotte}, and by Bonneel et
al.~\cite{bonneel2015sliced}. 
Deshpande et
al.~\cite{deshpande2019maxsliced} introduced the max-sliced Wasserstein
distance, which selects a single most-discriminative projection rather than
averaging, and Paty and Cuturi~\cite{paty2019subspace} studied a max-min,
subspace-robust variant projecting onto an optimal low-dimensional subspace.
Nguyen et al. \cite{nguyen2021distributional} propose the Distributional Sliced-Wasserstein (DSW) distance, which can be viewed as lying between the classical SW and max-SW (rather than sampling directions from the uniform distribution, as in SW, or optimizing over a single maximizing direction, as in max-SW, DSW optimizes over a regularized probability distribution on projection directions).
Kolouri et
al.~\cite{kolouri2019generalized} replaced the linear projection by a nonlinear
\emph{defining function}, producing the generalized sliced Wasserstein (GSW)
distance and its max-sliced variant. Important precursors of the slicing approach are the iterative transfer ideas developed by Pitié et al.  \cite{pitie2005n, pitie2007automated}.
SW constructions have also found applications in topological data analysis \cite{carriere2017sliced},
statistical and parametric estimation \cite{kolouri2018sliced}, generative modeling \cite{deshpande2018generative,kolouri2019sliced,liutkus2019sliced,wu2019sliced}, domain adaptation \cite{lee2019sliced}
and on imaging and geometric data analysis, including point-cloud registration, the study of sliced Wasserstein gradient flows and texture synthesis (see, e.g., \cite{lai2017multiscale,cozzi2025long, heitz2021sliced}). In this broader context, we ask when a single, possibly nonlinear, scalar slicer simultaneously recovers the ambient Wasserstein distance value and admits an ambient-optimal plan among its slice-optimal couplings.

\paragraph{Two roles for slicing, and two directions of bound}
Slicing has been used for two distinct purposes. The first is to define a cheap
\emph{distance}: sliced, max-sliced, and generalized sliced Wasserstein distances furnish \emph{lower} bounds on the
ambient Wasserstein distance, so long as each slicer is a $1$-Lipschitz function.
The second, more recent, is to produce a \emph{transport plan}: a one-dimensional
optimal coupling can be lifted back to the ambient supports, and the ambient cost
of that lifted plan is an \emph{upper} bound on the Wasserstein distance.
Mahey et al.~\cite{mahey2023swgg} made this upper-bound role explicit with Sliced Wasserstein Generalised Geodesics 
(SWGG) discrepancy and its min-SWGG variant, which lifts the plan induced by an optimal one-dimensional linear projection
and selects the slice of least ambient cost; they observe directly that
min-SWGG is an upper bound, while max-SW is a lower bound. Chapel et
al.~\cite{chapel2025dgswp} extended this to nonlinear slicers (min-GSWP),
formulated slice selection as a bilevel optimization, and again noted that the
lifted coupling is an admissible ambient plan, hence an upper bound on $W_p$ (for $p>1$). Inspired by the work of Rowland et al. \cite{rowland2019orthogonal}, 
Liu et al.~\cite{liu2025expected} construct ambient couplings by averaging,
rather than minimizing over, lifted one-dimensional plans (expected sliced
transport plans), and Tanguy et al.~\cite{tanguy2025sliced} analyze both the
pivot/min-sliced (a generalization of SWGG) and expected-sliced constructions for general measures and
relate them to constrained Kantorovich formulations. Thus, both the lower
(max-sliced) and upper (min-sliced) directions, including for nonlinear
slicers, are by now established.

Moreover, prior work already establishes exactness of an optimized min-sliced upper construction in a particular high-dimensional $W_2$ regime. Specifically, Mahey et al. \cite[Proposition~3.2]{mahey2023swgg}, as restated by Tanguy et al. \cite[Section~6.2, Proposition~11]{tanguy2025sliced}, show that for two uniform $n$-point measures (with an additional technical hypothesis), 
the condition $2n\leq d+1$ ensures that the min-SWGG coincides with the ambient $W_2$ distance.

Our question is different. Rather than identifying classes of measure pairs for which optimization over projections recovers the ambient Wasserstein distance, our goal is to characterize, for a fixed pair of measures, the possibly nonlinear (1-Lipschitz) slicers that make both roles of slicing exact. Namely, we focus our analysis around the $W_1$ distance, for which we address that the induced one-dimensional $W_1$
 distance must recover the ambient $W_1$
 distance, while the slicer-induced transport problem must admit an ambient-optimal coupling (i.e., among the couplings that are optimal for the sliced-based ground cost, there is one that is also optimal for the ambient transport problem). We show that this occurs precisely when an optimal one-dimensional Kantorovich--Rubinstein (KR) potential pulls back through the slicer to an optimal KR potential for the ambient problem.
 
\paragraph{The latent duality, and what remains open}
At the level of duality, a connection between sliced and ambient OT is in
principle available. \underline{The Kantorovich--Rubinstein theorem writes $W_1$ as a
supremum over $1$-Lipschitz potentials $g:\mathbb{R}^d \to \mathbb{R}$, and a
scalar slice of $W_1$ with respect to a 1-Lipschitz slicer $f:\mathbb{R}^d\to\mathbb{R}$, admits its own KR representation over $1$-Lipschitz
functions $h:\mathbb{R}\to\mathbb{R}$. Because $h \circ f$ is $1$-Lipschitz whenever both factors are, scalar slicing with a $1$-Lipschitz slicer $f$ is exactly the ambient KR dual restricted to potentials that \emph{factor} as
$g = h \circ f$.} This factorization is implicit in the standard derivation of
the $W_1$ dual formulation; it underlies recent sharp comparisons between sliced and
standard $1$-Wasserstein distances~\cite{carlier2025sharp}, and the
max-sliced $W_1$ value has been written explicitly as a joint supremum over a
direction and a $1$-Lipschitz scalar function~\cite{boedihardjo2026sharp}.
Equivalence results, however, are partial and in one case fragile: Paty and
Cuturi~\cite{paty2019subspace} proved strong equivalence of subspace-robust
$W_2$ and the classical $W_2$, whereas the analogous $p=1$ claim for max-sliced
$W_1$~\cite{bayraktar2021strong} was retracted in the authors'
erratum~\cite{bayraktar2024errata}, which shows the equivalence fails for
$d \ge 2$. So linear slicing does not in general recover $W_1$ even up to a
constant.

What the prior min-sliced constructions establish is that the upper bound can be
optimized and lifted into a usable plan; what they do not establish is when the
upper bound is \emph{tight}, or whether a single, identifiable slicer makes the
lower and upper objectives \emph{coincide exactly} with $W_1$. Their selection
criteria are also sensitive to the one-dimensional ordering: the slice value
function is discontinuous and depends on the induced
permutation\footnote{for instance, for empirical measures, the slicer orders the source and target support points by their scalar coordinates, thereby inducing permutations that determine the lifted monotone coupling; as the slicer varies, the ordering remains fixed until two coordinates coincide and, if they subsequently exchange order, the induced permutation, and hence the lifted matching, may change abruptly }~\cite{chapel2025dgswp}, so tightness arguments that route through a
particular monotone lift require an implicit uniqueness assumption. 

A generative-modeling perspective further clarifies the role of the slicer class. KR duality is the basis of WGANs, where a discriminator or critic is trained over a parameterized class of Lipschitz functions in order to approximate an optimal KR potential \cite{arjovsky2017wasserstein}. Max-sliced Wasserstein GANs pursue a related strategy, but replace the full critic class by one-dimensional Wasserstein comparisons along an optimized, typically linear, projection \cite{deshpande2019maxsliced}. Generalized sliced Wasserstein distances enlarge this idea by allowing nonlinear defining functions, and max-GSW optimizes over this richer class of generalized slicers \cite{kolouri2019generalized}. From the viewpoint developed here, these approaches differ mainly in the admissible class of scalar witnesses used to approximate the KR objective. A linear max-sliced loss restricts the witness to functions that factor through a linear map, while a generalized sliced loss allows factorizations $h\circ f$ through nonlinear slicers. From this viewpoint, our results identify the exact scalar witness that
closes the sliced sandwich. In this work, $L(f)$ is the sliced lower value induced by
$f$, while $U(f)$ is the upper value obtained by minimizing the
ambient cost among slice-optimal couplings, obtaining $L(f)\leq W_1\leq U(f)$. Precise definitions are given in Section \ref{sec:slice_sandwich} (\eqref{eq:def_Lf}, \eqref{eq:def_U_tilde}, and \eqref{eq:sandwich_exact_slicer}). When the slicer class contains an
optimal KR potential $\phi^*$, the identity outer function $h(t)=t$ suffices,
and the same slicer attains both values:
$L(\phi^*)=W_1(\mu,\nu)=U(\phi^*)$.
Thus WGAN critics, max-sliced losses, and generalized slicers can be interpreted
as different restricted routes toward the same optimal scalar witness.

\paragraph{Contributions} 
This paper closes that gap: building on the aforementioned constructions, we determine when the lower and upper bounds on $W_1$ associated with the same slicer are \emph{simultaneously} tight, and identify the slicers that achieve this exactness.
We do not claim to introduce the min-sliced upper
bound, the lifted transport plan, or the KR-duality link between sliced and
ambient OT; these appear, respectively, in~\cite{mahey2023swgg,chapel2025dgswp},
in~\cite{mahey2023swgg,liu2025expected,tanguy2025sliced}, and latently in the
classical theory \cite{villani2003topics,villani2008optimal,santambrogio2015optimal}. Our contribution is to show that the gap collapses
\emph{exactly}, and to identify the object that collapses it. 
Our results are established for arbitrary Borel probability measures $\mu,\nu\in\cP_1(\bbR^d)$, that is, with finite first moments. Concretely, we establish the following:

\begin{itemize}[leftmargin=*]
\item \textbf{A two-sided sandwich for $1$-Lipschitz slicers.} Let $C$ denote the ambient $W_1$ cost, i.e.,  $C(x,y)\coloneqq\|x-y\|_2$ for all $x,y\in \mathbb R^d$, where $\|\cdot\|_2$ denotes the standard Euclidean norm on $\mathbb R^d$.
For any $1$-Lipschitz slicer $f:\mathbb R^d\to \mathbb R$, the induced one-dimensional cost $D^{(f)}(x,y)\coloneqq |f(x)-f(y)|$ trivially 
satisfies
$D^{(f)} \le C$ pointwise, giving the bound
$\langle \gamma_f, D^{(f)} \rangle \le W_1(\mu,\nu) \le \langle \gamma_f, C
\rangle$ for every slice-optimal plan $\gamma_f$, which are the optimizers of the OT problem between $\mu$ and $\nu$ replacing the ambient cost $C$ by the sliced cost $D^{(f)}$ (see Lemma \ref{lem: pointwise bounds}). Optimizing each side over a
slicer class of 1-Lipschitz functions $f$ yields a max-sliced \emph{lower} objective $L^*$ and a min-sliced
\emph{upper} objective $U^*$ with $L^* \le W_1(\mu,\nu) \le
U^*$ (see \eqref{eq:global_sandwich}). This places the existing lower (max-sliced) and upper
(min-sliced~\cite{mahey2023swgg,chapel2025dgswp}) constructions in a single
two-sided frame.

\item \textbf{KR potentials are exact, simultaneous optimizers.}
We show that if the slicer class contains an optimal KR potential $\phi^*$, then
$L(\phi^*) = W_1(\mu,\nu) = U(\phi^*)$, so $\phi^*$ \emph{simultaneously}
maximizes the lower objective and minimizes the upper one, and the sandwich
closes: $L^* = W_1(\mu,\nu) = U^*$. In particular, the min-sliced
upper bound is not merely optimizable but \emph{tight}, and the maximizing
lower slicer and minimizing upper slicer can be taken to be the \emph{same}
function. The lower identity follows from the KR dual with $h(t)=t$; the
load-bearing direction is the upper side, where every \emph{ambient} optimal
plan is shown to be optimal for the sliced cost induced by $\phi^*$. See Proposition \ref{prop:KR_simultaneous_no_uniqueness}, Theorem \ref{thm: characterization} and Corollary \ref{coro: characterization}.

\item \textbf{No uniqueness assumption.}
Our upper objective is defined by a ``tie-broken (lexicographic) 
transport
problem'': among all slice-optimal couplings, select one of minimal ambient cost (see its precise definition in \eqref{eq:def_U_tilde}).
This removes the dependence on the one-dimensional ordering (i.e., the source of the
discontinuity and permutation-sensitivity noted for min-SWGG/min-GSWP~\cite{chapel2025dgswp}) and makes the upper identity hold in full generality, including when ties or atomic pushforwards lead to multiple slice-optimal couplings.

\item \textbf{Nonlinearity may be needed for tightness.} Theorem~\ref{thm: characterization} shows that exactness for a slicer $f\in\mathcal{F}_1$ is equivalent to the existence of a $1$-Lipschitz scalar function $h_f$ such that $h_f\circ f$ is an optimal KR potential. Thus, requiring $\phi^*\in\mathcal{F}_1$ is a convenient sufficient condition (corresponding to $h_{\phi^*}(t)=t$) but it is not necessary. For a linear slicer $f_v(x)=v^\top x$, exactness requires an optimal KR potential of the ridge form $h(v^\top x)$, which need not exist in dimension $d\ge2$. This is consistent with the failure of exact linear max-sliced/$W_1$ equivalence~\cite{bayraktar2024errata} and motivates the generalized nonlinear slicer setting of~\cite{kolouri2019generalized,chapel2025dgswp}. From a generative-modeling perspective, max-sliced and max-generalized-sliced losses may therefore be viewed as restricted KR critics, with Theorem~\ref{thm: characterization} and Corollary~\ref{coro: characterization} identifying the precise factorization property needed to close the sandwich.

\end{itemize}

\paragraph{Scope and limitations}
A sufficient condition for the exact identities is that $\mathcal{F}_1$ contain an optimal KR potential. More generally, exactness requires some $f\in\mathcal{F}_1$ through which an optimal KR potential factors as $h_f\circ f$, with $h_f:\bbR\to\bbR$ $1$-Lipschitz. For restricted classes, such as linear projections or a fixed parametric family (e.g., a neural-network architecture), $L^*$ remains a valid lower bound and $U^*$ remains a valid upper bound, but neither bound need be tight. This recovers the regime in which prior min-sliced and max-sliced surrogates operate. 
Our results are established for arbitrary Borel probability measures $\mu,\nu\in\cP_1(\bbR^d)$ with the Euclidean ground cost. The experiments specialize to finite empirical measures, for which the lower and tie-broken upper objectives can be evaluated using finite-dimensional optimal transport programs.

\section{Slice-Conditional Transport and Two-Sided Bounds for \texorpdfstring{$W_1$}{W1}}
\label{sec:slice_sandwich}

\paragraph{Ambient $W_1$ optimal transport} 
Let $\cP(\bbR^d)$ be the set of probability measures on $\bbR^d$, and let
$
\cP_1(\bbR^d)\coloneqq\left\{\omega\in\cP(\bbR^d):\ \int_{\bbR^d}\|x\|_2\,\dd\omega(x)<\infty\right\}
$
be those with finite first moment (here, $\|\cdot\|_2$ denotes the standard Euclidean norm on $\bbR$, $\|x\|_2=\sqrt{\sum x_i^2}$). Throughout, $\mu$ and $\nu$ will be two
probability measures on $\mathbb{R}^d$ with finite first moments, i.e.
$\mu,\nu\in\cP_1(\bbR^d)$. This is the standing assumption for all of the
theoretical results below; it guarantees that the ambient and pushed-forward
Wasserstein distances, and the dual integrals appearing throughout, are finite.
We consider the ambient $W_1$ cost function 
\begin{equation}\label{eq: ambient cost}
C(x,y) \;\coloneqq\; \|x-y\|_2
\end{equation}
for which the ambient $W_1$ distance is
\begin{equation}
W_1(\mu,\nu) \;\coloneqq\; \min_{\gamma\in\Gamma(\mu,\nu)} \langle \gamma,C\rangle,
\label{eq:ambient_W1_primal}
\end{equation}
where we denote by
$ 
\Gamma(\mu,\nu) \;\coloneqq\; \left\{ \gamma\in\cP(\bbR^{d\times d}) \, : \, \gamma(\cdot\times \bbR^d) =\mu,\; \gamma(\bbR^d\times \cdot) = \nu \right\}
$
the set of feasible couplings, and 
where $\langle \gamma,C\rangle$ denotes the pairing between $\gamma$ and $C$, i.e., $\langle \gamma,C\rangle := \int_{\bbR^d\times\bbR^d} C(x,y) \, \dd \gamma(x,y)$.
Let
\begin{equation}\label{eq: opt ambient plans}
\Gamma_C^*
\;\coloneqq\;
\argmin_{\gamma\in\Gamma(\mu,\nu)}
\langle \gamma,C\rangle
\end{equation}
denote the set of ambient optimal transport plans.
We recall that \eqref{eq:ambient_W1_primal} is finite 
by the standing assumption $\mu,\nu\in\cP_1(\bbR^d)$.

\paragraph{Slice cost and slice-optimal plans}
Given a slicer $f:\mathbb{R}^d\to\mathbb{R}$, we replace the ambient cost \eqref{eq: ambient cost} by defining the induced one-dimensional cost function 
\begin{equation}\label{eq: projected cost}
D^{(f)}(x,y) \;\coloneqq\; |f(x)-f(y)|.
\end{equation}
In analogy with \eqref{eq: opt ambient plans}, the set of slice-optimal ambient couplings is now
\begin{equation}
\Gamma_f
\;\coloneqq\;
\displaystyle\argmin_{\gamma\in\Gamma(\mu,\nu)}
\langle \gamma,D^{(f)}\rangle.
\label{eq:def_Gamma_f}
\end{equation}
Equivalently, the optimal value of \eqref{eq:def_Gamma_f} is the one-dimensional Wasserstein distance
between the pushed-forward measures $f_{\#}\mu$ and $f_{\#}\nu$ (where for any $\omega\in\cP(\bbR^d)$ and any $\omega$-measurable function $g:\bbR^d\to\bbR$, $g_{\#}\omega(A) \;\coloneqq\; \omega(\{x\in\mathbb R^d:\, g(x)\in A\})$ for all Borel measurable set $A\subseteq \mathbb R$).
Indeed, since $f$ is $1$-Lipschitz,  $f_\#\mu,f_\#\nu\in\mathcal{P}_1(\bbR)$, and when considering the cost \eqref{eq: projected cost}, we have  
\begin{equation}
\min_{\gamma\in\Gamma(\mu,\nu)}
\langle \gamma,D^{(f)}\rangle
=
W_1(f_\#\mu,f_\#\nu).
\label{eq:slice_value_pushforward}
\end{equation}
Pushing any
$\gamma\in\Gamma(\mu,\nu)$ forward through $(f,f)$ produces a coupling of $f_\#\mu$
and $f_\#\nu$ with the same cost, which gives one inequality. Conversely, since
$\bbR^d$ and $\bbR$ are standard Borel spaces, disintegrating $\mu$ and $\nu$
along $f$ allows any coupling of $f_\#\mu$ and $f_\#\nu$ to be lifted to an
element of $\Gamma(\mu,\nu)$, giving the reverse inequality.

\paragraph{A sandwich inequality for 1-Lipschitz slicers}
Assume that $f:\bbR^d\to\bbR$ is $1$-Lipschitz, that is, $|f(x)-f(y)|
\;\le\;
\|x-y\|_2$ for all
$x,y\in\mathbb{R}^d$.
Then
\begin{equation}
D^{(f)}(x,y) = |f(x)-f(y)| \le \|x-y\|_2 = C(x,y).
\label{eq:D_le_C}
\end{equation}

\begin{lemma}\label{lem: pointwise bounds}
    Let $f:\mathbb R^d\to \mathbb R$ be a $1$-Lipschitz map. For any $\gamma_f\in\Gamma_f$, we have the two-sided bound
\begin{equation}
\langle \gamma_f,D^{(f)}\rangle
\;\le\;
W_1(\mu,\nu)
\;\le\;
\langle \gamma_f,C\rangle.
\label{eq:sandwich_any_slice_plan}
\end{equation}
\end{lemma}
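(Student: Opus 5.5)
The two inequalities in \eqref{eq:sandwich_any_slice_plan} come from two different facts, and I will treat them separately. The right inequality needs only feasibility. By definition \eqref{eq:def_Gamma_f}, $\gamma_f$ minimizes $\langle\cdot,D^{(f)}\rangle$ over $\Gamma(\mu,\nu)$, so in particular $\gamma_f\in\Gamma(\mu,\nu)$. Hence it is admissible in \eqref{eq:ambient_W1_primal}, and $W_1(\mu,\nu)\le\langle\gamma_f,C\rangle$ follows at once. This pairing is well defined in $[0,\infty]$ because $C\ge0$. It is in fact finite, since $\|x-y\|_2\le\|x\|_2+\|y\|_2$ and both marginals lie in $\cP_1(\bbR^d)$, though finiteness is not needed for the inequality.

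The left inequality uses the pointwise bound \eqref{eq:D_le_C} together with optimality of $\gamma_f$. Let $\gamma^*\in\Gamma_C^*$ be an ambient optimal plan. Existence of $\gamma^*$ is the standard attainment of the minimum in \eqref{eq:ambient_W1_primal}: $C$ is continuous and nonnegative, and $\Gamma(\mu,\nu)$ is weakly compact. Since $\gamma^*\in\Gamma(\mu,\nu)$ and $\gamma_f$ minimizes the slice cost, we get
\[
\langle\gamma_f,D^{(f)}\rangle\le\langle\gamma^*,D^{(f)}\rangle\le\langle\gamma^*,C\rangle=W_1(\mu,\nu).
\]
The middle step integrates $D^{(f)}\le C$ against the nonnegative measure $\gamma^*$.

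If I would rather not rely on attainment, the same argument works with an infimum. For every $\gamma\in\Gamma(\mu,\nu)$ one has $\langle\gamma_f,D^{(f)}\rangle\le\langle\gamma,D^{(f)}\rangle\le\langle\gamma,C\rangle$, and taking the infimum over $\gamma$ gives the bound.

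No step here is a real obstacle. The only subtle point is that $\Gamma_f$ must be nonempty for the statement to have content. Nonemptiness follows from the same compactness argument, since $D^{(f)}$ is continuous and nonnegative, so $\gamma\mapsto\langle\gamma,D^{(f)}\rangle$ is weakly lower semicontinuous. Alternatively it follows from \eqref{eq:slice_value_pushforward}. In either case the lemma concerns a given $\gamma_f\in\Gamma_f$, so nonemptiness only matters to ensure the statement is not vacuous.
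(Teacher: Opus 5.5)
Your proposal is correct and follows the paper's proof: the right inequality by feasibility of $\gamma_f$ in the ambient problem, and the left inequality by comparing with an ambient optimal plan $\gamma^*$ via slice-optimality of $\gamma_f$ and the pointwise bound $D^{(f)}\le C$. Your infimum variant, which avoids needing attainment, and your nonemptiness remark are valid additions that the paper does not spell out at this point.
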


\begin{proof}
Let $\gamma^*\in\Gamma_C^*$. Since $\gamma_f$ is optimal for the slice cost, $D^{(f)}\le C$ pointwise due to \eqref{eq:D_le_C}, and $\gamma^*$ is a nonnegative measure, we have
\begin{equation*}
\langle \gamma_f,D^{(f)}\rangle\leq \langle \gamma^*,D^{(f)}\rangle
\le
\langle \gamma^*,C\rangle
=
W_1(\mu,\nu).
\end{equation*}
This proves the left inequality in \eqref{eq:sandwich_any_slice_plan}. The right inequality in \eqref{eq:sandwich_any_slice_plan} follows because every $\gamma_f\in\Gamma_f$
is feasible for the ambient problem, that is, for \eqref{eq:ambient_W1_primal}.
\end{proof}

\paragraph{Lower and upper sliced objectives}
Let $\mathcal{F}_1$ be a chosen class of $1$-Lipschitz slicers
$f:\mathbb{R}^d\to\mathbb{R}$. For $f\in\mathcal{F}_1$, define the lower slice value as \eqref{eq:slice_value_pushforward}:
\begin{equation}
L(f) \;\coloneqq\; \min_{\gamma\in\Gamma(\mu,\nu)} \langle \gamma,D^{(f)}\rangle = W_1(f_\#\mu,f_\#\nu).
\label{eq:def_Lf}
\end{equation}
Then the max-sliced lower objective is
\begin{equation*}
L^*
\;\coloneqq\;
\sup_{f\in\mathcal{F}_1} L(f).
\label{eq:Lstar}
\end{equation*}
Note that one can rewrite \eqref{eq:def_Gamma_f} as
\begin{equation}\label{eq: Gamma_f equivalent}
\Gamma_f=\{\gamma\in \Gamma(\mu,\nu): \, \langle \gamma, D^{(f)}\rangle=L(f)\},
\end{equation}
and by \eqref{eq:sandwich_any_slice_plan}, for every $\gamma\in \Gamma_f$,
\begin{equation}\label{eq:L_lower_pointwise}
    L(f)= \langle \gamma, D^{(f)}\rangle\leq W_1(\mu,\nu) \qquad \forall f\in \mathcal{F}_1,
\end{equation}
obtaining
\begin{equation}
L^*
\le
W_1(\mu,\nu).
\label{eq:Lstar_lower}
\end{equation}

The upper side requires more care. If the slice problem has multiple optimal couplings, different
elements of $\Gamma_f$ can have different ambient costs $\langle\gamma,C\rangle$.
Using \eqref{eq: Gamma_f equivalent}, instead of
choosing an arbitrary slice-optimal plan, we define the canonical upper evaluation as the best ambient
cost among slice-optimal couplings:
\begin{equation}
U(f) \;\coloneqq\; \min_{\gamma\in\Gamma_f}
\langle \gamma,C\rangle =
\min_{\gamma\in \displaystyle \underset{\eta\in\Gamma(\mu,\nu)}{\argmin}\langle \eta,D^{(f)}\rangle}\langle \gamma,C\rangle.
\label{eq:def_U_tilde}
\end{equation}
This is a two-stage, lexicographically ordered optimal transport problem, where the sliced cost has priority over the ambient cost. The sliced cost is the primary objective, meaning that one first restricts to the set $\Gamma_f$ of slice-optimal couplings. Then, one minimizes the ambient cost over all slice-cost minimizers. Thus, the ambient cost acts as a secondary tie-breaking criterion among slice-optimal couplings.

We notice that the set $\Gamma(\mu,\nu)$ is nonempty and weakly compact. Moreover, since $f$ is continuous, the sliced cost
$D^{(f)}$ is continuous and nonnegative, so $\Gamma_f$
is nonempty and weakly compact. The ambient cost $C$ is nonnegative and lower
semicontinuous, so its minimum in \eqref{eq:def_U_tilde} over $\Gamma_f$ is attained and, therefore,
it is well defined.

The corresponding min-sliced upper objective is
\begin{equation*}
U^*
\;\coloneqq\;
\inf_{f\in\mathcal{F}_1}
U(f).
\label{eq:Ustar_tilde}
\end{equation*}
Taking the minimum over $\gamma\in\Gamma_f$ in the right inequality given in \eqref{eq:sandwich_any_slice_plan}, yields
\begin{equation}
W_1(\mu,\nu)
\le
U(f)
\qquad
\forall f\in\mathcal{F}_1.
\label{eq:U_tilde_upper_pointwise}
\end{equation}
Therefore
\begin{equation}
W_1(\mu,\nu)
\le
U^*.
\label{eq:Ustar_tilde_upper}
\end{equation}
Finally, from \eqref{eq:L_lower_pointwise} and \eqref{eq:U_tilde_upper_pointwise} we have
\begin{equation}
     L(f)\leq W_1(\mu,\nu)\leq U(f)\qquad
\forall f\in\mathcal{F}_1, \label{eq:sandwich_exact_slicer}
\end{equation}
and combining \eqref{eq:Lstar_lower} and \eqref{eq:Ustar_tilde_upper} yields
\begin{equation}
L^*
\le
W_1(\mu,\nu)
\le
U^*.
\label{eq:global_sandwich}
\end{equation}

\begin{remark}
In one dimension, optimal couplings for the cost $|s-t|$ may be chosen to be monotone (canonical one-dimensional representatives). For discrete
measures, this can be realized by sorting the support points and applying the usual greedy monotone
matching of cumulative mass distributions. 
This coupling is uniquely determined by the one-dimensional marginals, although it need not be the unique optimal coupling for the one-dimensional Wasserstein distance $W_1$: since $r\mapsto|r|$ is not strictly convex, nonmonotone optimal couplings may also exist. 
In addition, if distinct points in the original supports in $\mathbb R^d$ have the same one-dimensional projection, lifting the projected coupling back to the original supports may introduce additional nonuniqueness.
The
definition of $U(f)$ avoids this ambiguity by selecting, among all ambient couplings that
are optimal for the sliced problem, one with minimal ambient cost. Equivalently, one may view
\eqref{eq:def_U_tilde} as a canonical tie-breaking rule for lifting a sliced optimal plan back to the
ambient supports.
\end{remark}

\begin{remark}
    Notice that the upper bound $U^*$ is easy to attain.  For instance, any constant function $f$ induces a cost $D^{(f)}$ that is identically zero, obtaining $\Gamma_f = \Gamma( \mu, \nu )$, and therefore  $U(f)=W_1(\mu,\nu)=U^*$ (if $\mathcal{F}_1$ contains a constant slicer). See also Example \ref{example: tie-breaking} below.  In general, every 1-Lipschitz slicer $f\in \mathcal F_1$ satisfying $\Gamma_C^*\cap \Gamma_f\neq\emptyset$ attains the upper bound $U(f)=W_1(\mu,\nu)=U^*$. 
    The purpose of $U$ is structural rather than computational: it tests whether the slice-optimal set of couplings $\Gamma_f$ contains an ambient-optimal plan and provides an ordering-independent upper value, but it is not proposed as a cheaper general method for computing $W_1$.
\end{remark}

    Subsequently, we will show that if $\phi^*$ is an optimal Kantorovich--Rubinstein potential from the dual formulation of $W_1(\mu,\nu)$, it attains \emph{both} upper and lower bounds, closing the gap (see Proposition \ref{prop:KR_simultaneous_no_uniqueness}, Theorem \ref{thm: characterization} and Corollary \ref{coro: characterization}).

\paragraph{KR duality and scalar slicing}
For $\mu,\nu\in\cP_1(\bbR^d)$, the Kantorovich--Rubinstein dual formulation of the ambient $W_1$ distance is
\begin{equation}\label{eq: KR}
    W_1(\mu,\nu) = \sup_{\substack{g:\mathbb{R}^d\to\mathbb{R}\\ \Lip(g)\le 1}} \left[ \int_{\bbR^d} g(x) \, \dd \mu(x) - \int_{\bbR^d} g(y) \, \dd \nu(y) \right]. 
\end{equation}
For a fixed slicer $f$, the one-dimensional sliced value also admits the KR dual representation
\begin{equation} \label{eq:sliced_KR_dual}
L(f)  = W_1(f_\#\mu,f_\#\nu) = \sup_{\substack{h:\mathbb{R}\to\mathbb{R}\\ \Lip(h)\le 1}} J_{\mathrm{KR}}(f), \ \text{where } J_{\mathrm{KR}}(f) :=  \int_{\bbR} h(s) \, \dd f_{\#}\mu(s) - \int_{\bbR} h(t) \, \dd f_{\#}\nu(t).
\end{equation}
Thus scalar slicing restricts the ambient KR dual class to potentials of the form $g=h\circ f$ for 
$f:\mathbb{R}^d\to\mathbb{R}$,
\qquad
$h:\mathbb{R}\to\mathbb{R}$ with 
$\Lip(f)\le 1$ and $\Lip(h)\le 1$,
where $\Lip(f)$ (resp. $\Lip(h)$) denotes the Lipschitz constant associated to $f$ (resp. to $h$)\footnote{i.e., $|f(x)-f(y)|\leq \Lip(f)\|x-y\|_2$ for all $x,y\in \mathbb R^d$; and $|h(u)-h(v)|\leq \Lip(h)|u-v|$ for all $u,v\in \mathbb R$}.
Indeed, $h\circ f$ is $1$-Lipschitz whenever both $h$ and $f$ are $1$-Lipschitz. Therefore,
\eqref{eq:sliced_KR_dual} is precisely the ambient KR dual restricted to potentials that factor
through the scalar slicer $f$.

Finally, we recall that for both \eqref{eq: KR} and \eqref{eq:sliced_KR_dual}, the supremum is attained
\cite[Theorem~5.10(iii) and Particular Cases~5.4 and~5.16]
{villani2008optimal}. 

\paragraph{KR potentials as optimal slicers}
Let $\phi^*:\mathbb{R}^d\to\mathbb{R}$ be an optimal KR potential for
$W_1(\mu,\nu)$:
\begin{equation}
\Lip(\phi^*)\le 1, \qquad \int_{\bbR^d} \phi^*(x) \, \dd \mu(x) - \int_{\bbR^d} \phi^*(y) \, \dd \nu(y) = W_1(\mu,\nu).
\label{eq:KR_potential}
\end{equation}

\begin{lemma}\label{lem: lower_bound_attained}
    Let $\phi^*:\mathbb R^d\to \mathbb R$ be an optimal KR potential satisfying
\eqref{eq:KR_potential}. Then $L(\phi^*)
=
W_1(\mu,\nu)$.
\end{lemma}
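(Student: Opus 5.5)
The plan is to trap $L(\phi^*)$ between $W_1(\mu,\nu)$ from above and below. The upper bound needs no work. Since $\phi^*$ is $1$-Lipschitz, Lemma \ref{lem: pointwise bounds} applies to it, and \eqref{eq:sandwich_exact_slicer} gives $L(\phi^*)\le W_1(\mu,\nu)$ directly; membership of $\phi^*$ in $\mathcal{F}_1$ plays no role here.

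For the reverse inequality I will use the one-dimensional KR dual \eqref{eq:sliced_KR_dual} with $f=\phi^*$ and test it against the identity $h(t)=t$, which is $1$-Lipschitz. This gives
\begin{equation*}
L(\phi^*) \;\ge\; \int_{\bbR} s\,\dd\phi^*_\#\mu(s)-\int_{\bbR} t\,\dd\phi^*_\#\nu(t)
\;=\;\int_{\bbR^d}\phi^*\,\dd\mu-\int_{\bbR^d}\phi^*\,\dd\nu \;=\; W_1(\mu,\nu).
\end{equation*}
The middle equality is the change-of-variables formula for pushforwards, and the last equality is the optimality condition \eqref{eq:KR_potential}.

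The one step that needs care is checking that these integrals are finite, so that the change of variables is legitimate and no expression of the form $\infty-\infty$ appears. I will argue as follows. Because $\phi^*$ is $1$-Lipschitz, $|\phi^*(x)|\le|\phi^*(0)|+\|x\|_2$. Since $\mu,\nu\in\cP_1(\bbR^d)$, this shows $\phi^*\in L^1(\mu)\cap L^1(\nu)$. Equivalently, $\phi^*_\#\mu,\phi^*_\#\nu\in\cP_1(\bbR)$, so the identity map is integrable against both pushforward measures. With integrability in hand, the two inequalities combine to give $L(\phi^*)=W_1(\mu,\nu)$.

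As an alternative route that avoids duality on the sliced side, I could take any $\gamma\in\Gamma(\mu,\nu)$ and bound $\langle\gamma,D^{(\phi^*)}\rangle\ge\int(\phi^*(x)-\phi^*(y))\,\dd\gamma=W_1(\mu,\nu)$. Minimizing over $\gamma$ in \eqref{eq:def_Lf} gives the same conclusion, and this version needs only the same integrability check.
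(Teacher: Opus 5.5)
Your proof is correct and matches the paper's argument essentially verbatim. The paper also tests the one-dimensional KR dual \eqref{eq:sliced_KR_dual} with $h(t)=t$, takes the reverse inequality from the sandwich, and then gives the same coupling-level alternative \eqref{eq:every_coupling_slice_lower}; your explicit integrability check via $|\phi^*(x)|\le|\phi^*(0)|+\|x\|_2$ and your appeal to Lemma~\ref{lem: pointwise bounds} instead of \eqref{eq:Lstar_lower} (which formally presupposes $\phi^*\in\mathcal{F}_1$, not assumed in the lemma) are small gains in rigor over the paper's write-up.
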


\begin{proof}
    Choosing the one-dimensional dual function $h(t)=t$ in
\eqref{eq:sliced_KR_dual} gives
\begin{equation}
L(\phi^*) = W_1((\phi^*)_\#\mu,(\phi^*)_\#\nu) \ge
\int_{\bbR^d} \phi^*(x) \, \dd \mu(x) - \int_{\bbR^d} \phi^*(y) \, \dd \nu(y) 
= W_1(\mu,\nu).
\label{eq:L_phi_lower_from_KR}
\end{equation}
On the other hand, since $\phi^*$ is $1$-Lipschitz, \eqref{eq:Lstar_lower} implies
\begin{equation}
L(\phi^*)
\le
W_1(\mu,\nu).
\label{eq:L_phi_upper_from_sandwich}
\end{equation}
Combining \eqref{eq:L_phi_lower_from_KR} and \eqref{eq:L_phi_upper_from_sandwich} yields $L(\phi^*)
=
W_1(\mu,\nu)$.

We can also see this directly at the level of couplings: For every
$\gamma\in\Gamma(\mu,\nu)$,
\begin{align}
\langle \gamma,D^{(\phi^*)}\rangle & =
 \int_{\bbR^d\times \bbR^d} \left|\phi^*(x)-\phi^*(y)\right| \, \dd \gamma(x,y) \nonumber\\
 & \ge \int_{\bbR^d\times \bbR^d} \left(\phi^*(x)-\phi^*(y)\right) \, \dd \gamma(x,y) \nonumber\\
 & = \int_{\bbR^d} \phi^*(x) \, \dd \mu(x) - \int_{\bbR^d} \phi^*(y) \, \dd \nu(y) \nonumber\\
 & = W_1(\mu,\nu).
\label{eq:every_coupling_slice_lower}
\end{align}
Taking the minimum over $\gamma\in\Gamma(\mu,\nu)$ gives
$L(\phi^*)\ge W_1(\mu,\nu)$, and the reverse inequality follows from
$\phi^*$ being $1$-Lipschitz (see \eqref{eq:Lstar_lower}).
\end{proof}

The next result shows that KR potentials for $W_1(\mu,\nu)$ are, moreover, \emph{simultaneous} lower and upper optimizers for \eqref{eq:sandwich_exact_slicer} reaching both equalities in \eqref{eq:global_sandwich}.

\begin{proposition}
\label{prop:KR_simultaneous_no_uniqueness}
Assume $\phi^*\in\mathcal{F}_1$ is an optimal KR potential satisfying
\eqref{eq:KR_potential}. Then
\begin{equation}
L(\phi^*)
=
W_1(\mu,\nu)
=
U(\phi^*).
\label{eq:tight_bounds_phi_tilde}
\end{equation}
Consequently, $L^*
=
W_1(\mu,\nu)
=
U^*$,
and $\phi^*$ simultaneously maximizes the lower objective and minimizes the tie-broken upper
objective:
\begin{equation}
\phi^*
\in
\argmax_{f\in\mathcal{F}_1} L(f),
\qquad
\phi^*
\in
\argmin_{f\in\mathcal{F}_1} U(f).
\end{equation}
\end{proposition}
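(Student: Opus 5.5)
The lower identity $L(\phi^*)=W_1(\mu,\nu)$ is exactly Lemma~\ref{lem: lower_bound_attained}, so the work is on the upper side. Since \eqref{eq:U_tilde_upper_pointwise} already gives $W_1(\mu,\nu)\le U(\phi^*)$, it suffices to exhibit one coupling in $\Gamma_{\phi^*}$ whose ambient cost equals $W_1(\mu,\nu)$. The plan is to show the stronger statement $\Gamma_C^*\subseteq\Gamma_{\phi^*}$, i.e.\ every ambient optimal plan is also slice-optimal for $D^{(\phi^*)}$. Then any $\gamma^*\in\Gamma_C^*$, which is nonempty by attainment in \eqref{eq:ambient_W1_primal}, is feasible in \eqref{eq:def_U_tilde}. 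This gives $U(\phi^*)\le\langle\gamma^*,C\rangle=W_1(\mu,\nu)$ with no uniqueness assumption on the slice-optimal coupling.

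To prove the inclusion, fix $\gamma^*\in\Gamma_C^*$ and chain
\[
W_1(\mu,\nu)=\int(\phi^*(x)-\phi^*(y))\,\dd\gamma^*\le\int|\phi^*(x)-\phi^*(y)|\,\dd\gamma^*\le\int\|x-y\|_2\,\dd\gamma^*=W_1(\mu,\nu).
\]
The first equality uses the marginal constraints together with \eqref{eq:KR_potential}. Integrability holds because $\phi^*$ is $1$-Lipschitz and $\mu,\nu\in\cP_1(\bbR^d)$, so $\phi^*$ has at most linear growth. The first inequality is trivial, and the second is \eqref{eq:D_le_C}. All terms are therefore equal, so $\langle\gamma^*,D^{(\phi^*)}\rangle=W_1(\mu,\nu)=L(\phi^*)$. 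By the characterization \eqref{eq: Gamma_f equivalent} of $\Gamma_{\phi^*}$, this means $\gamma^*\in\Gamma_{\phi^*}$. (As a by-product, $\phi^*(x)-\phi^*(y)=\|x-y\|_2$ holds $\gamma^*$-a.e., which is the usual complementary slackness, but we do not need it.)

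The consequences then follow from the global sandwich. Since $\phi^*\in\mathcal{F}_1$, we have $L^*\ge L(\phi^*)=W_1(\mu,\nu)$ and $U^*\le U(\phi^*)=W_1(\mu,\nu)$. Combined with \eqref{eq:global_sandwich}, this gives $L^*=W_1(\mu,\nu)=U^*$. Because $\phi^*$ attains these extremal values, it lies in the stated $\argmax$ of $L$ and $\argmin$ of $U$. The only step needing real care is the inclusion $\Gamma_C^*\subseteq\Gamma_{\phi^*}$: one must use the slice optimality characterization via the value $L(\phi^*)$, which requires the lower identity first, and check the integrability so that splitting $\int(\phi^*(x)-\phi^*(y))\,\dd\gamma^*$ into marginal integrals is legitimate.
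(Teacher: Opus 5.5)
Your proposal is correct and follows essentially the same route as the paper. It takes the lower identity from Lemma~\ref{lem: lower_bound_attained}, uses an inequality chain collapsing to equality to show $\Gamma_C^*\subseteq\Gamma_{\phi^*}$ (hence $U(\phi^*)\le W_1(\mu,\nu)$ with no uniqueness assumption), and reads off the consequences from the global sandwich; the only cosmetic difference is that you open the chain with the KR identity integrated against $\gamma^*$ rather than with $L(\phi^*)\le\langle\gamma^*,D^{(\phi^*)}\rangle$.
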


\begin{proof}
Lemma \ref{lem: lower_bound_attained} already shows that $L(\phi^*)=\min_{\gamma\in\Gamma(\mu,\nu)}
\langle \gamma,D^{(\phi^*)}\rangle=W_1(\mu,\nu)$.
Moreover, let $\gamma^*\in\Gamma_C^*$ be any ambient optimal transport plan. Since $\phi^*$ is
$1$-Lipschitz, $D^{(\phi^*)}\le C$ pointwise. Hence
\begin{equation}
L(\phi^*) \leq \langle \gamma^*,D^{(\phi^*)}\rangle
\le
\langle \gamma^*,C\rangle
=
W_1(\mu,\nu)=L(\phi^*)=W_1(\mu,\nu)=\langle \gamma^*,C\rangle,
\label{eq:gamma_star_slice_upper}
\end{equation}
implying that every inequality in \eqref{eq:gamma_star_slice_upper} is, in fact, an equality. 
Therefore, $\gamma^*\in\Gamma_{\phi^*}$; that is, every ambient optimal plan is also optimal for the
slice cost induced by the KR potential.

By the definition of $U$,
\begin{equation*}
U(\phi^*)
=
\min_{\gamma\in\Gamma_{\phi^*}}
\langle \gamma,C\rangle
\le
\langle \gamma^*,C\rangle
=
W_1(\mu,\nu).
\label{eq:U_tilde_phi_upper}
\end{equation*}
On the other hand, by \eqref{eq:U_tilde_upper_pointwise}, $W_1(\mu,\nu)
\le
U(\phi^*)$. Thus $W_1(\mu,\nu)
=
U(\phi^*)$. 
Together with $L(\phi^*)=W_1(\mu,\nu)$, this proves
\eqref{eq:tight_bounds_phi_tilde}.

Finally, since $L(f)\le W_1(\mu,\nu)$ for all $f\in\mathcal{F}_1$, the equality
$L(\phi^*)=W_1(\mu,\nu)$ implies
$L^*=W_1(\mu,\nu)$, and 
$\phi^*\in\argmax_{f\in\mathcal{F}_1}L(f)$.
Similarly, since $U(f)\ge W_1(\mu,\nu)$ for all $f\in\mathcal{F}_1$, the equality
$U(\phi^*)=W_1(\mu,\nu)$ implies
$U^*=W_1(\mu,\nu)$, and $
\phi^*\in\argmin_{f\in\mathcal{F}_1}U(f)$.
\end{proof}

\begin{remark}[Role of the slicer class]
\label{rem:slicer_class}
The exact identities in Proposition~\ref{prop:KR_simultaneous_no_uniqueness} require that the chosen
slicer class $\mathcal{F}_1$ contains at least one optimal KR potential $\phi^*$. If
$\mathcal{F}_1$ is restricted, for example to linear projections or to a fixed neural-network architecture,
then $L^*$ remains a valid lower bound and $U^*$ remains a valid upper bound, but the
bounds need not be tight. Moreover, in Theorem \ref{thm: characterization} we will provide a full characterization of $\mathcal{F}_1$-slicers that attain both the lower ($L^*$) and upper ($U^*$) bounds: optimal KR potentials  not only reach both bounds, as shown in Proposition \ref{prop:KR_simultaneous_no_uniqueness}, but also if a slicer $f\in \mathcal{F}_1$ closes the gap (i.e. $L(f)=U(f)$), then an optimal KR potential $\phi^*$ as in \eqref{eq:KR_potential} factors through $f$ (see Theorem \ref{thm: characterization} part \textit{(iii)} for the precise description).
\end{remark}

\begin{remark}[Why uniqueness is not needed] Note that no uniqueness assumption on the slice-optimal coupling is required in the proof of Proposition \ref{prop:KR_simultaneous_no_uniqueness}.
The tie-broken upper objective $U$ removes this requirement. The proof only needs
the existence of one ambient optimal plan $\gamma^*$ that is also slice-optimal for
$D^{(\phi^*)}$. Once such a plan exists, the secondary minimization in $U(\phi^*)$ can only
decrease the ambient cost relative to $\langle\gamma^*,C\rangle=W_1(\mu,\nu)$, while feasibility
forces it to be no smaller than $W_1(\mu,\nu)$. Hence equality follows without any uniqueness
assumption.
\end{remark}

\begin{example}[Importance of tie-breaking]\label{example: tie-breaking}
Let $\theta_i=2\pi(i-1)/n$ and $z_i=(\cos\theta_i,\sin\theta_i)\in\mathbb S^1$, for $i=1,\ldots,n$.
For $0<r<R$, consider the uniform empirical measures $\mu=\frac1n\sum_{i=1}^n\delta_{R z_i}$  and 
$\nu=\frac1n\sum_{i=1}^n\delta_{r z_i}$,
with, for example, $r=1$ and $R=2$. Fix the radial slicer
$f(x)=\|x\|_2$ (which, at the same time, is an optimal KR potential). Then, for every $i,j$,
\begin{equation*}
    D^{(f)}(rz_i,Rz_j)
=
\bigl|f(rz_i)-f(Rz_j)\bigr|
=
R-r.    
\end{equation*}
Consequently, every feasible coupling has the same sliced cost,
and hence $\Gamma_f=\Gamma(\mu,\nu)$. This illustrates that for a fixed slicer $f$, the set $\Gamma_f$ defined in \eqref{eq:def_Gamma_f}
might contain many different couplings.
Moreover, the reverse triangle inequality gives $\|rz_i-Rz_j\|_2\geq R-r  $,
while the radial coupling $rz_i\mapsto Rz_i$ attains this lower
bound. Therefore,
\begin{equation*}
    W_1(\mu,\nu)=R-r
\qquad\text{and}\qquad
U(f)=R-r.    
\end{equation*}
For a  permutation coupling $\pi$, its ambient cost is
\begin{equation*}
    A_\pi
:=
\langle \pi,C\rangle
=
\frac1n\sum_{i=1}^n
\|rz_i-Rz_{\pi(i)}\|_2
=
\frac1n\sum_{i=1}^n
\sqrt{
r^2+R^2
-2rR\cos(\theta_i-\theta_{\pi(i)})
}.    
\end{equation*}
Although every $\pi$ is sliced-optimal, the values $A_\pi$
can differ substantially. In particular, if $\pi$ is induced by $Rz_i\mapsto rz_i$ for all $i\in\{1,\dots,n\}$, i.e., the identity map at the level of the indices $i=1,\dots, n$, then    $A_{\pi}=R-r$;
whereas, for even $n$, the antipodal permutation ($Rz_i\mapsto rz_{i+n/2}=-rz_i$ for all $i\in\{1,\dots,n\}$)
satisfies $A_{\pi}=R+r$. In Figure \ref{fig:tie-break-simple}, we plot a histogram of $A_\pi$ over randomly sampled permutations illustrating that an arbitrary slice-optimal lift can have a
large ambient cost, whereas the tie-broken objective $U(f)$ selects
the smallest ambient cost among all slice-optimal couplings (reaching, in this case, $W_1(\mu,\nu)=U(f)=R-r$).  

\begin{figure}[t!]
\centering
\includegraphics[width=\linewidth]{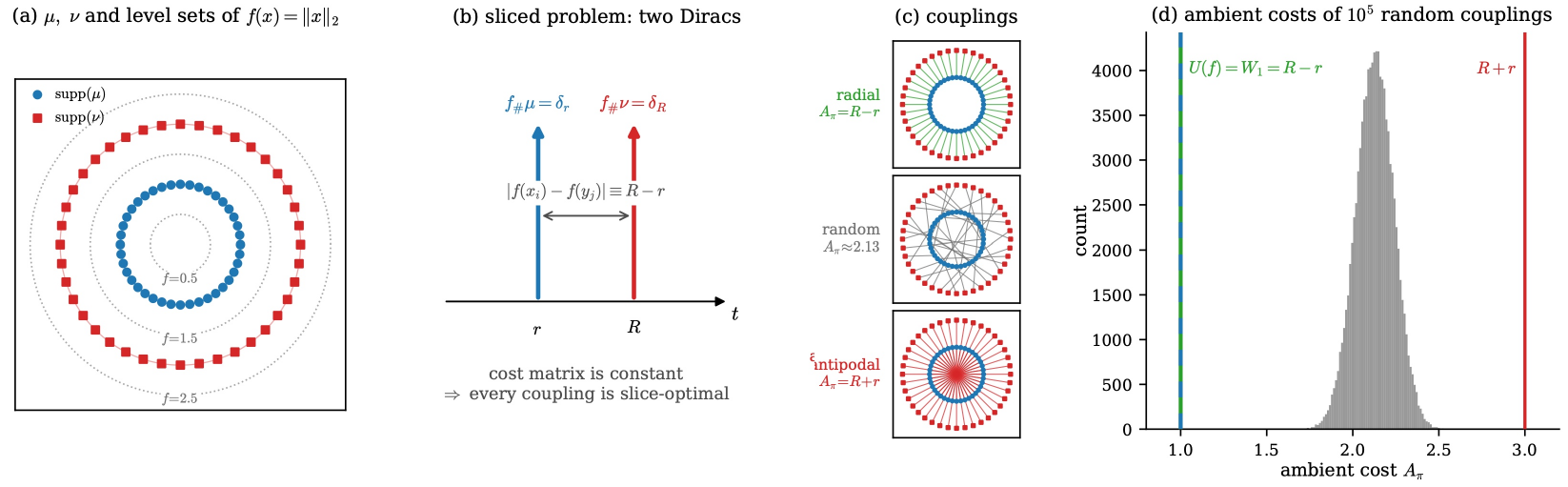}
\vspace{-0.25in}
\caption{Tie-breaking on concentric circles ($n=40$, $r=1$, $R=2$).
(a)~Uniform empirical measures $\mu,\nu$ with level sets of the radial
slicer $f(x)=\|x\|_2$. (b)~Both pushforwards are Dirac deltas, so every entry
of the sliced cost matrix equals $R-r$ and \emph{every} feasible coupling
is slice-optimal. (c)~Three slice-optimal permutation couplings whose
ambient costs $A_\pi$ nevertheless range from $R-r$ (radial) to $R+r$
(antipodal). (d)~Ambient costs of $100{,}000$ randomly sampled
permutation couplings. The two-stage optimization defining $U(f)$
selects the slice-optimal coupling of minimum ambient cost, so the
green $W_1$ line and blue $U(f)$ line coincide at $R-r$, while the red
line marks $R+r$.}
\label{fig:tie-break-simple}
\end{figure}
\end{example}

\begin{example}[Scaling path]\label{example: scaling path}   
Let $\phi^*$ be an optimal KR potential for
$W_1(\mu,\nu)$, and define
$f_\lambda\coloneqq\lambda\phi^*$, $\lambda\in[0,1]$.  
Notice that $\operatorname{Lip}(f_\lambda)
=
\lambda\operatorname{Lip}(\phi^*)
\leq \lambda$
so every $\lambda\in[0,1]$ satisfies the $1$-Lipschitz constraint.
If $\operatorname{Lip}(\phi^*)=1$ (such as the radial potential $\phi^*(x)=\|x\|_2$), then $\lambda>1$ lies outside the admissible slicer class.
For every $\lambda>0$, $D^{(f_\lambda)}
=
\lambda D^{(\phi^*)}$,
and hence multiplying the sliced cost by the positive scalar
$\lambda$ does not change its minimizers
(i.e., $\Gamma_{f_\lambda}=\Gamma_{\phi^*}$).
Consequently,
\begin{equation*}
    L(f_\lambda)
=
\lambda L(\phi^*)
=
\lambda W_1(\mu,\nu),
\qquad
U(f_\lambda)
=
U(\phi^*)
=
W_1(\mu,\nu).    
\end{equation*}
At $\lambda=0$, the slicer is constant and
$D^{(f_0)}\equiv0$, so that $\Gamma_{f_0}=\Gamma(\mu,\nu)$.
Therefore,
$L(f_0)=0$,
$U(f_0)
=
W_1(\mu,\nu)$.
Thus, along the entire scaling path,
\begin{equation*}
    \frac{L(f_\lambda)}{W_1(\mu,\nu)}=\lambda,
\qquad
\frac{U(f_\lambda)}{W_1(\mu,\nu)}=1,
\qquad
\frac{U(f_\lambda)-L(f_\lambda)}{W_1(\mu,\nu)}
=1-\lambda.
\end{equation*}
We consider again the concentric-circle configuration from Example \ref{example: tie-breaking}, where $\mu$ and $\nu$ are uniform empirical measures supported on the
outer circle of radius $R$ and on the inner circle of radius
$r<R$, respectively, for which $\phi^*(x)=\|x\|_2$ and $
W_1(\mu,\nu)=R-r$. In Figure \ref{fig: scaling-path},
we compare the theoretical curves
$\lambda\mapsto\lambda$ and $\lambda\mapsto1$ with the computed
quantities
\begin{equation*}
    \lambda\longmapsto
\frac{L(f_\lambda)}{W_1(\mu,\nu)}
\qquad\text{and}\qquad
\lambda\longmapsto
\frac{U(f_\lambda)}{W_1(\mu,\nu)}.    
\end{equation*}
This example shows that the equality $U(f)=W_1(\mu,\nu)$ alone
does not imply that $f$ is an informative or exact slicer (in fact, such equality holds
even for the collapsed slicer $f_0\equiv0$). The meaningful condition
is instead closure of the sandwich,
$L(f)=U(f)$. 

\begin{figure}[t!]
    \centering
    \includegraphics[width=0.9\linewidth]{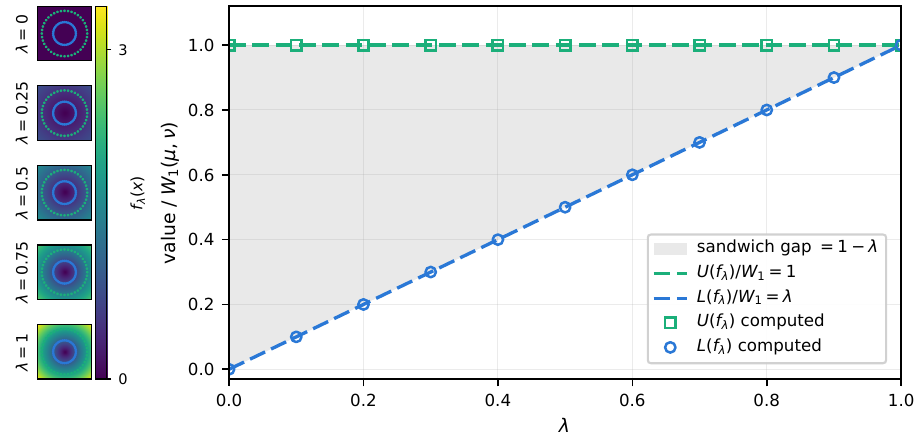}
    \vspace{-0.2in}\caption{Scaling path for the concentric-circle example of empirical measures with $n=40$ equally
spaced support points on circles of radii $r=1$ and $R=2$. We consider
$f_\lambda=\lambda\phi^*$, where
$\phi^*(x)=\|x\|_2$ and $\lambda\in[0,1]$. Left: $f_\lambda$ for
$\lambda\in\{0,0.25,0.5,0.75,1\}$ on a common colour scale, with the two
circles overlaid. Right: dashed lines show the
theoretical identities, while markers show the values computed using
discrete optimal transport and the exact two-stage definition of
$U(f_\lambda)$. Along the path,
$L(f_\lambda)/W_1(\mu,\nu)=\lambda$ and
$U(f_\lambda)/W_1(\mu,\nu)=1$, so the normalized sandwich gap equals
$1-\lambda$. In particular, the collapsed slicer $f_0\equiv0$ already
satisfies $U(f_0)=W_1(\mu,\nu)$ despite having $L(f_0)=0$; the
sandwich closes only at $\lambda=1$.}
    \label{fig: scaling-path}
\end{figure}
\end{example}

The two examples \ref{example: tie-breaking} and \ref{example: scaling path} isolate the distinct roles of the two sides of the sandwich. The first (Example \ref{example: tie-breaking}) shows that the tie-broken definition of $U$ is essential as many couplings can be equally optimal after slicing while having very different ambient transport costs. The second (Example \ref{example: scaling path}) shows that the equality $U(f)=W_1(\mu,\nu)$ alone does not characterize an informative slicer. This motivates taking the closure $L(f)=U(f)$ as the relevant notion of exactness. The following result addresses precisely when this occurs. Indeed, 
the next theorem gives an exact-slicer characterization: It identifies the $1$-Lipschitz slicers for which the lower--upper interval collapses, namely
$L(f)=W_1(\mu,\nu)=U(f)$.
The equivalences show that this exactness can be detected either through the sliced lower value $L(f)=W_1(\mu,\nu)$, the vanishing of the gap $L(f)=U(f)$, a factorization of an optimal KR potential, or a calibration property of slice-optimal and ambient-optimal plans.

\begin{theorem}\label{thm: characterization}
Let $f:\mathbb R^d\to\mathbb R$ be $1$-Lipschitz and  $S_f \coloneqq \{(x,y): \, D^{(f)}(x,y)=C(x,y)\}$.
Then the following statements are equivalent:
\begin{enumerate}
    \item[(i)] $L(f)=W_1(\mu,\nu)$.
    \item[(ii)] $L(f)=U(f)$.
    \item[(iii)] 
    There exists a function
$h_f:\mathbb R\to\mathbb R$ with $\Lip(h_f)\leq 1$
such that $h_f\circ f$ is an optimal KR
potential for $W_1(\mu,\nu)$.
   \item[(iv)] There exists $\gamma_f\in\Gamma_f$ with $C=D^{(f)}$ $\gamma_f$-almost everywhere; equivalently, since $S_f$ is closed, $\gamma_f$ is supported on $S_f$.
    \item[(v)] Every $\gamma^*\in \Gamma_C^*$ belongs to $\Gamma_f$ and satisfies $C=D^{(f)}$ $\gamma^*$-almost everywhere.
\end{enumerate}
\end{theorem}

\begin{proof}
First assume (i), namely $L(f)=W_1(\mu,\nu)$. Let $\gamma^*\in\Gamma_C^*$. Similarly as in \eqref{eq:gamma_star_slice_upper},
\begin{equation*}
L(f)
\leq
\langle \gamma^*,D^{(f)}\rangle
\leq
\langle \gamma^*,C\rangle
=
W_1(\mu,\nu)
=
L(f).
\end{equation*}
Hence $\langle \gamma^*,D^{(f)}\rangle=L(f)$, which implies $\gamma^*\in\Gamma_f$. Therefore
\begin{equation*}
U(f)
=
\min_{\gamma\in\Gamma_f}\langle\gamma,C\rangle
\leq
\langle \gamma^*,C\rangle
=
W_1(\mu,\nu).
\end{equation*}
Consequently, $L(f)=W_1(\mu,\nu)=U(f)$. Thus (i) implies (ii).
Conversely, if (ii) holds, then \eqref{eq:sandwich_exact_slicer} gives 
\begin{equation*}
L(f)\leq W_1(\mu,\nu)\leq U(f)=L(f),
\end{equation*}
so $L(f)=W_1(\mu,\nu)$. Thus (ii) implies (i). Therefore (i) and (ii) are equivalent.

Next we prove the equivalence with (iii). Suppose first that (i) holds. Since, by definition \eqref{eq:def_Lf}, $L(f)=W_1(f_\#\mu,f_\#\nu)$,
the one-dimensional KR duality  gives a $1$-Lipschitz function
$h_f:\mathbb R\to\mathbb R$ such that
\begin{equation*}
\int_{\bbR^d} h_f\circ f \,\dd \mu-\int_{\bbR^d} h\circ f \,\dd \nu=\int_{\bbR} h_f \,\dd f_{\#}\mu-\int_{\bbR} h_f \,\dd f_{\#}\nu
=
W_1(f_\#\mu,f_\#\nu)
=
L(f)
\end{equation*}
(since the supremum in \eqref{eq:sliced_KR_dual} is attained).
By (i), this equals $W_1(\mu,\nu)$. Since both $h_f$ and $f$ are $1$-Lipschitz, the composition
$h_f\circ f$ is $1$-Lipschitz on $\mathbb R^d$. Hence $h_f\circ f$ is feasible for the ambient
KR dual problem \eqref{eq: KR} and attains the value $W_1(\mu,\nu)$. Therefore $h_f\circ f$ is an
optimal KR potential for $W_1(\mu,\nu)$, proving (iii).

Conversely, suppose (iii) holds. Then, there exists $h_f:\bbR\to\bbR$, which is $1$-Lipschitz, and therefore it is feasible for the one-dimensional KR dual between $f_\#\mu$ and $f_\#\nu$, such that $h_f\circ f$ is an optimal ambient KR potential. So, 
\begin{equation*} 
W_1(\mu,\nu) =\int_{\bbR^d} h_f\circ f \,\dd \mu - \int_{\bbR^d} h_f\circ f \, \dd \nu=\int_{\bbR} h_f \,\dd f_{\#}\mu - \int_{\bbR} h_f \, \dd f_{\#}\nu 
\leq W_1(f_\#\mu,f_\#\nu) = L(f). 
\end{equation*} Thus $W_1(\mu,\nu)\leq L(f)$. Combining this with \eqref{eq:sandwich_exact_slicer}, we obtain $L(f)=W_1(\mu,\nu)$, which is (i).

Now, we prove the equivalence with (iv) and (v).
First, (v) implies, in particular, (iv). Now, 
suppose (i) holds, and let
$\gamma^*\in\Gamma_C^*$. As shown above, in the part (i)$\Rightarrow$(ii),  $\gamma^*\in\Gamma_f$. Moreover,
\begin{equation*}
0
\leq
\langle \gamma^*, C-D^{(f)}\rangle
=
\langle \gamma^*,C\rangle
-
\langle \gamma^*,D^{(f)}\rangle
=
W_1(\mu,\nu)-L(f)
=
0.
\end{equation*}
Since $C-D^{(f)}$ is nonnegative and $\gamma^*(A)\geq0$ for all (measurable) $A\subset\bbR^d\times \bbR^d$, this implies $C-D^{(f)} = 0$ $\gamma^*$-almost everywhere.
Hence $\gamma^*$ is supported on $S_f$. 
Thus, (iv) and (v) hold.

Finally, suppose (iv) holds. Let $\gamma_f\in\Gamma_f$ be supported on $S_f$. Then
\begin{equation*}
L(f)
=
\langle \gamma_f,D^{(f)}\rangle
=
\langle \gamma_f,C\rangle,
\end{equation*}
because $D^{(f)}=C$ on the support of $\gamma_f$. Since $\gamma_f$ is feasible for the
ambient problem,
\begin{equation*}
W_1(\mu,\nu)\leq \langle \gamma_f,C\rangle = L(f).
\end{equation*}
Together with \eqref{eq:sandwich_exact_slicer}, this gives $L(f)=W_1(\mu,\nu)$.
Thus, $\gamma_f\in \Gamma_C^*$, (iv) implies (i),
and the proof is complete.
\end{proof}

\begin{remark}
    As a consequence of the proofs of Proposition \ref{prop:KR_simultaneous_no_uniqueness} and Theorem \ref{thm: characterization}, if $f\in \mathcal{F}_1$ is such that $L(f)=W_1(\mu,\nu)$,  then every ambient optimal plan belongs to $\Gamma_f$, i.e., if $\gamma^*\in \Gamma_C^*$ then $\gamma^*\in \Gamma_f$. However, the converse fails. For example, if $f\equiv c$ for some constant $c$, then $D^{(f)}\equiv 0$, so $\Gamma_f=\Gamma(\mu,\nu)$ and therefore every $\gamma^*\in \Gamma_C^*$ belongs to $\Gamma_f$, but $L(f)=0\not=W_1(\mu,\nu)$ (unless $\mu\equiv \nu$). This phenomenon is also illustrated by the endpoint $\lambda=0$ of the scaling path in Example \ref{example: scaling path}.
    This is the reason why part (v) in Theorem \ref{thm: characterization} requires an extra assumption on the support of the optimal couplings $\gamma^*$. 
\end{remark}

Finally, the next corollary reformulates exactness, namely $L(f)=W_1(\mu,\nu)=U(f)$, as a factorization property of optimal KR potentials. Part \textit{(iii)} of Theorem \ref{thm: characterization} establishes that an optimal ambient KR potential $\phi^*$ factors through the slicer $f$ when the gap closes ($L(f)=U(f)$), i.e.,  $\phi^*=h_f\circ f$. Corollary \ref{coro: characterization} focuses on the role of $h_f$ as an optimal KR potential for the sliced Wasserstein problem. Although the proof is a repetition of the technique used in the proofs of Lemma \ref{lem: lower_bound_attained}, Proposition \ref{prop:KR_simultaneous_no_uniqueness} and Theorem \ref{thm: characterization}, we include it here for completeness. 

\begin{corollary}\label{coro: characterization}
Let $f:\mathbb R^d\to\mathbb R$ be $1$-Lipschitz. 
Then $L(f)=W_1(\mu,\nu)$ if and only if there exists 
an optimal one-dimensional KR potential $h_f:\mathbb R\to\mathbb R$ for $W_1(f_\#\mu,f_\#\nu)$ such that $h_f\circ f$ is an optimal KR potential for $W_1(\mu,\nu)$.
Moreover, if $L(f)=W_1(\mu,\nu)$, then every optimal one-dimensional KR potential $h:\bbR\to\bbR$ for $W_1(f_\#\mu,f_\#\nu)$ has this property. 
In particular, if $f$ itself is an optimal KR potential for $W_1(\mu,\nu)$, then the identity map $h(t)=t$ is an optimal one-dimensional KR potential for $W_1(f_\#\mu,f_\#\nu)$.
\end{corollary}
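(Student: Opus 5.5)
The plan is to reduce everything to the chain of equalities between the one-dimensional dual objective and the ambient dual objective, using the pushforward identity
\[
\int_{\bbR} h\,\dd f_\#\mu-\int_{\bbR} h\,\dd f_\#\nu=\int_{\bbR^d} h\circ f\,\dd\mu-\int_{\bbR^d} h\circ f\,\dd\nu ,
\]
which holds for every $1$-Lipschitz $h$. Finite first moments guarantee integrability. Both directions of the equivalence, and the ``moreover'' clause, then follow by comparing values in the two KR duals \eqref{eq: KR} and \eqref{eq:sliced_KR_dual}.

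\textbf{($\Leftarrow$).} Suppose some optimal one-dimensional potential $h_f$ has $h_f\circ f$ optimal for the ambient problem. Then
\[
W_1(\mu,\nu)=\int h_f\circ f\,\dd\mu-\int h_f\circ f\,\dd\nu=\int h_f\,\dd f_\#\mu-\int h_f\,\dd f_\#\nu=L(f).
\]
In fact, only feasibility of $h_f$ is needed: it gives $W_1(\mu,\nu)\le L(f)$, and combined with \eqref{eq:sandwich_exact_slicer} this yields equality. This is exactly the (iii)$\Rightarrow$(i) step of Theorem~\ref{thm: characterization}.

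\textbf{($\Rightarrow$) and the ``moreover'' clause.} Assume $L(f)=W_1(\mu,\nu)$. I will prove the stronger ``moreover'' statement directly, since existence then follows: the supremum in \eqref{eq:sliced_KR_dual} is attained, so the set of optimal $h$ is nonempty. Take any optimal one-dimensional KR potential $h$ for $W_1(f_\#\mu,f_\#\nu)$. The composition $h\circ f$ is $1$-Lipschitz, hence feasible in \eqref{eq: KR}. By the pushforward identity its ambient dual value equals $W_1(f_\#\mu,f_\#\nu)=L(f)=W_1(\mu,\nu)$, so $h\circ f$ is an optimal ambient KR potential.

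\textbf{The ``in particular'' clause.} If $f=\phi^*$ is itself optimal, Lemma~\ref{lem: lower_bound_attained} gives $L(f)=W_1(\mu,\nu)$. For $h(t)=t$, which is $1$-Lipschitz, the pushforward identity gives
\[
\int t\,\dd f_\#\mu-\int t\,\dd f_\#\nu=\int\phi^*\,\dd\mu-\int\phi^*\,\dd\nu=W_1(\mu,\nu)=W_1(f_\#\mu,f_\#\nu).
\]
So the identity attains the one-dimensional supremum and is an optimal one-dimensional potential.

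I do not expect a genuine obstacle here. The only point needing care is justifying the change of variables and the finiteness of the integrals. For this I note that $|h(t)|\le|h(0)|+|t|$ and $|f(x)|\le|f(0)|+\|x\|_2$, so $h\circ f$ is integrable under $\mu,\nu\in\cP_1(\bbR^d)$. Integrability of $t\mapsto t$ against $f_\#\mu$ and $f_\#\nu$ follows from the same bound.
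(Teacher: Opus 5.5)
Your proposal is correct and follows the paper's proof essentially step by step. The pushforward identity, combined with attainment of the one-dimensional KR dual, gives the forward direction and the ``moreover'' clause at once, and the same identity gives the converse; for the identity map you invoke Lemma~\ref{lem: lower_bound_attained} where the paper re-derives the same chain of inequalities inline, and your integrability check via $|h(f(x))|\le |h(0)|+|f(0)|+\|x\|_2$ is a harmless detail the paper leaves implicit.
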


\begin{proof}
If $L(f)=W_1(\mu,\nu)$ and $h_f$ is any optimal KR potential for $W_1(f_\#\mu,f_\#\nu)$, then
\begin{small}
    \[
\int h_f\circ f\,\dd\mu-\int h_f\circ f\,\dd\nu
= W_1(f_\#\mu,f_\#\nu)=L(f)=W_1(\mu,\nu),
\]
\end{small}
\vspace{-0.15in}

\noindent and, since $h_f\circ f$ is $1$-Lipschitz, it is an optimal ambient KR potential. Conversely, if $h_f$ is optimal for the projected problem and $h_f\circ f$ is ambient optimal, the same identity gives $W_1(\mu,\nu)=W_1(f_\#\mu,f_\#\nu)=L(f)$.
Finally, suppose that $f$ itself is an optimal KR potential for $W_1(\mu,\nu)$. 
Taking $h(t)=t$, which is in particular  $1$-Lipschitz,  we have
\begin{small}
\begin{equation*}
W_1(\mu,\nu)=\int_{\bbR^d}  f \, \dd \mu - \int_{\bbR^d} f \, \dd \nu = \int_{\bbR}  t\, \dd f_{\#}\mu(t) - \int_{\bbR}  t\, \dd f_{\#}\nu(t) \leq W_1(f_\#\mu,f_\#\nu)=L(f)\leq W_1(\mu,\nu),
\end{equation*}
\end{small}
\vspace{-0.15in}

\noindent implying $W_1(f_\#\mu,f_\#\nu) = L(f) = W_1(\mu,\nu)$.
Therefore $h(t)=t$ attains the one-dimensional KR dual for $W_1(f_\#\mu,f_\#\nu)$, as claimed.
\end{proof}

\section{Parameterized Slicer Classes}
\label{sec:lip_nets}

Proposition~\ref{prop:KR_simultaneous_no_uniqueness} identifies an optimal KR
potential $\phi^*\in\mathcal{F}_1$ as the object that closes the sandwich
$L^*=W_1=U^*$. In practice $\phi^*$ is unknown, so one can parameterize
$\mathcal{F}_1$ by a family of neural networks and maximize $L(f)$ (or
minimize $U(f)$) over the parameters. For the resulting $L^*$ to
remain a valid lower bound and $U^*$ a valid upper bound, every
network in the family must be \emph{genuinely} $1$-Lipschitz: the pointwise
inequality $D^{(f)}\le C$ in~\eqref{eq:D_le_C} is what the bounds rest on, and
it holds only when $\Lip(f)\le 1$. We therefore restrict $\mathcal{F}_1$ to
architectures that enforce this constraint by construction rather than as a
soft penalty. Fortunately, this problem has been well studied in the literature. For completeness, we provide a brief overview of recent advances in this direction below.
We consider three candidate parameterizations---a norm-constrained GroupSort
network, random Fourier features with a norm-constrained linear read-out, and
the same feature map with the frequencies trained as well---and compare them on
an exactly $1$-Lipschitz target before selecting the one used in the rest of the
paper.

\paragraph{Composition and the affine building block}
By the composition property, a network $f=g_L\circ\sigma\circ
g_{L-1}\circ\cdots\circ\sigma\circ g_1$ is $1$-Lipschitz whenever every affine
map $g_\ell(z)=W_\ell z+b_\ell$ and every activation $\sigma$ is
$1$-Lipschitz~\cite{anil2019sorting}. For the affine maps with respect to the
Euclidean norm, this means controlling the spectral norm (largest singular
value) $\sigma_{\max}(W_\ell)\le 1$. Following
Miyato et al.~\cite{miyato2018spectral}, the spectral norm is estimated by
power iteration: maintaining left/right singular-vector estimates
$\tilde u,\tilde v$ updated by
\begin{equation*}
\tilde v \leftarrow \frac{W^\top \tilde u}{\|W^\top \tilde u\|_2},
\qquad
\tilde u \leftarrow \frac{W \tilde v}{\|W \tilde v\|_2},
\qquad
\hat\sigma(W) = \tilde u^\top W \tilde v,
\vspace{-0.075in}
\end{equation*}
and dividing $W$ by $\hat\sigma(W)$ is intended to yield a layer with spectral norm at most
one. 
A single iteration per step suffices in practice when $\tilde u$ is
warm-started from the previous step~\cite{miyato2018spectral}. We note one
caveat relevant to using the bounds as \emph{certificates} rather than
training surrogates: power iteration converges to $\sigma_{\max}$ from below, so
$\hat\sigma(W)\le\sigma_{\max}(W)$, and dividing by an underestimate can leave
the realized operator norm slightly above one. When a hard guarantee on
$D^{(f)}\le C$ is needed, the spectral norm should be evaluated exactly (e.g.\
via the SVD) at the point of certification.

\paragraph{Gradient-norm preservation and the choice of activation}
A subtlety specific to $1$-Lipschitz settings is that spectral-norm
constraints alone yield poor expressivity if the activation contracts the
gradient. Anil et al.~\cite{anil2019sorting} show that a norm-constrained
network with monotone elementwise activations (ReLU, sigmoid, tanh) whose
gradient has unit norm almost everywhere is necessarily \emph{linear}: a
$2$-norm-constrained ReLU network cannot represent even the absolute-value
function, because gradient-norm preservation forces the activations to stay in
their linear region. They resolve this with \emph{GroupSort}, which partitions
the pre-activations into groups and sorts each group; with group size two this
is the \emph{MaxMin} activation $(a,b)\mapsto(\max(a,b),\min(a,b))$. GroupSort
is $1$-Lipschitz and gradient-norm preserving, and they prove that
norm-constrained GroupSort networks are \emph{universal} approximators of
$1$-Lipschitz functions~\cite{anil2019sorting}. Tanielian et
al.~\cite{tanielian2021approximating} sharpen this into a quantitative
statement: norm-constrained GroupSort networks represent any Lipschitz
continuous piecewise-linear function exactly, and they give explicit upper
bounds on the depth and width required to approximate a $1$-Lipschitz function
to a prescribed accuracy. This universality is precisely what we want for
$\mathcal{F}_1$: it is the property under which the parameterized class can, in
principle, contain an optimal KR potential $\phi^*$ and thereby attain the
tight identities of Proposition~\ref{prop:KR_simultaneous_no_uniqueness}, while the approximation rates indicate the network size needed to come close in
practice. A linear slicer class cannot do so in general, consistent with the
failure of exact linear max-sliced/$W_1$ equivalence for $d\ge
2$~\cite{bayraktar2024errata}.
Finally, we note that the universality results of \cite{anil2019sorting,tanielian2021approximating} are established under the specific mixed operator-norm constraints considered therein, while in our experiments we use spectral normalization of the affine layers with respect to the Euclidean norm. Thus, these results motivate our choice of GroupSort but do not provide a formal universality guarantee for the particular architecture implemented here.

\paragraph{Fourier-feature slicers}
Constraining a deep network is not the only route to $\mathcal{F}_1$. A
cheaper alternative fixes a nonlinear feature map and constrains only a linear
read-out. Following the random-feature approximation of Rahimi and
Recht~\cite{rahimi2007random}, set
\begin{equation*}
z_{\Omega,b}(x)
\coloneqq
\sqrt{\tfrac{2}{m}}
\bigl(
\cos(\omega_1^\top x+b_1),\ldots,
\cos(\omega_m^\top x+b_m)
\bigr)^\top,
\qquad
f_a(x)\coloneqq a^\top z_{\Omega,b}(x),
\end{equation*}
and let $\Omega$ denote the $m\times d$ matrix with rows $\omega_k^\top$.
Differentiating,
$\nabla f_a(x)
=
-\sqrt{2/m}\,\Omega^\top\bigl(a\odot\sin(\Omega x+b)\bigr)$,
where $\odot$ is the entrywise product, and since $|\sin(\theta)|\leq1$ entrywise,
\begin{equation}\label{eq:rff_lipschitz}
\operatorname{Lip}(f_a)
\leq
\kappa_\Omega\|a\|_2,
\qquad
\kappa_\Omega
\coloneqq
\sqrt{\tfrac{2}{m}}\,\|\Omega\|_{\mathrm{op}},
\end{equation}
with $\|\cdot\|_{\mathrm{op}}$ the largest singular value. Imposing
$\|a\|_2\leq\kappa_\Omega^{-1}$ therefore makes every admissible slicer
$1$-Lipschitz. When $(\Omega,b)$ are drawn once and \emph{frozen},
$\kappa_\Omega$ is a constant of the draw and only the $m$ coefficients are
trained, so fitting the slicer reduces to a norm-constrained linear problem in
a fixed nonlinear feature space. This is the random Fourier feature (RFF)
class used in the appendix experiment of Figure~\ref{fig:RFF} (see Appendix \ref{sec:app:exp-RFF}). Throughout we
draw $\omega_k\sim\mathcal N(0,\ell^{-2}I_d)$ and $b_k\sim\mathrm{Unif}[0,2\pi)$,
the standard random-feature approximation to a Gaussian kernel of length-scale
$\ell$, and fix $\ell$ by the median heuristic, i.e., the median pairwise
distance of the data.

\paragraph{Learnable Fourier features}
Between the two constructions sits a third: keep the feature map but train it,
initializing $(\Omega,b)$ at random as above and optimizing them jointly with
$a$. The bound \eqref{eq:rff_lipschitz} still holds, but $\kappa_\Omega$ is no
longer a constant, so it must be recomputed at every evaluation, and the
caveat above about power iteration applies to $\|\Omega\|_{\mathrm{op}}$ exactly
as it does to the network's layer norms. The gain over the frozen class is not
merely a matter of additional parameters. Because the constraint couples its
two factors, the model can \emph{shrink} $\|\Omega\|_{\mathrm{op}}$ in order to
enlarge the admissible ball for $a$, trading a lower-frequency basis for a
larger amplitude budget; the frozen class has no such freedom.

\paragraph{Comparing the three classes}
To choose among them we fit all three to a common target that is exactly
$1$-Lipschitz: the signed distance function (SDF) of a plus-shaped region,
constructed as the pointwise minimum of two box SDFs. Since each box SDF is
$1$-Lipschitz under the $\ell_2$ norm and a pointwise minimum of $1$-Lipschitz
functions preserves this property, the target satisfies $\|\nabla f\|_2=1$ almost
everywhere, with creases along the medial axis where the two branches meet. It
is a demanding but fair probe: an admissible slicer may match it exactly, and
none may exceed it. All three classes are trained under identical conditions:
the same $50{,}000$-point sample on $[-2.5,2.5]^2$, the same batch size, Adam
with a cosine-annealed step size, $10^4$ iterations, and the same seed. Results
are collected in Table~\ref{tab:slicer-comparison} and
Figure~\ref{fig:slicer-comparison}.

\emph{Choosing the capacity of each class.} Matching trainable-parameter counts
across these families is neither possible nor meaningful: a frozen RFF slicer
has exactly $m$ trainable coefficients however wide its feature map, so
equalizing against a network of a few million parameters would demand an
infeasible $m$. Instead we give each class enough capacity that more stops
helping, verified by a sweep over each family's capacity knob---including the
length-scale that the Fourier classes carry and the network has no analogue
of, so that neither is handicapped by a heuristic left untouched. We report
the network with four hidden layers of width
$1024$, and both Fourier classes at $m=4096$ with the median-heuristic
bandwidth---the frozen class's own optimum, and a setting at which the
learnable class is indistinguishable from its best---so that the only
difference between the two Fourier rows is whether the frequencies are trained.

\emph{What the comparison shows.} Two quantities matter. The first is fit
quality, reported as $R^2$. The second is the largest gradient norm
$\max\|\nabla f\|_2$ the trained slicer attains, which on an exactly
$1$-Lipschitz target should approach $1$: a slicer whose gradient norm stays
well below $1$ is not using the budget it was given, so its Lipschitz
certificate is loose and it forfeits expressivity it has already paid for. The
frozen RFF class fails on both counts, and the first failure explains the
second. It saturates its certificate, in the sense that
$\kappa_\Omega\|a\|_2=1$ at the optimum, while realizing a maximum gradient
norm of only $0.530$; the bound \eqref{eq:rff_lipschitz} is attained only if
$a\odot\sin(\Omega x+b)$ aligns with the top singular direction of $\Omega$ at
some $x$, which a frozen random draw does not arrange, so roughly half the
Lipschitz budget is unusable however many features are added. The recovered
function is correspondingly over-smoothed, with no zero level set at all inside
the domain (Figure~\ref{fig:slicer-comparison}, third column).

Training the frequencies removes exactly this deficit, and by the mechanism
anticipated above: $\kappa_\Omega$ falls from $0.566$ to $0.388$ as the model
shrinks $\|\Omega\|_{\mathrm{op}}$ to widen the ball for $a$, and the realized
gradient norm rises to $0.998$, so the learnable class now uses essentially all
of its budget. Its residual error is therefore no longer a matter of the
constraint but of the basis: a finite sum of smooth cosines cannot produce a
crease, so its zero level set is a circle where the target's is a plus, and
$R^2$ stops at $0.967$. The GroupSort network is the only class that both uses
its budget and represents the shape: $R^2=0.9999$, a maximum pointwise error of
$0.025$, and a gradient norm of $0.995$ that is visually uniform over the domain
(Figure~\ref{fig:slicer-comparison}, second row). Its remaining discrepancies
sit exactly where the target is non-differentiable, along the diagonal creases
and the central crossing, where the true subgradient is set-valued and any
continuous network must transition smoothly rather than switch abruptly. This
is the expected and benign failure mode: the network cannot reproduce a sharp
gradient discontinuity, so it undershoots the unit norm in a thin neighborhood
of the medial axis while never violating the bound.

We therefore carry out all subsequent experiments with the $1$-Lipschitz
network. We stress that this is a practical choice and not a theoretical one.
Theorem~\ref{thm: characterization} applies to any $1$-Lipschitz $f$, and
Remark~\ref{rem:slicer_class} already records that $L^*$ and $U^*$ remain valid
bounds for any restricted $\mathcal{F}_1$. But the exact identities of
Proposition~\ref{prop:KR_simultaneous_no_uniqueness} are attained only when
$\mathcal{F}_1$ contains an optimal KR potential, so among candidate
parameterizations the one that approximates $1$-Lipschitz functions best is the
one most likely to realize them; and of the three, only the norm-constrained
GroupSort network carries a universal approximation
guarantee~\cite{anil2019sorting,tanielian2021approximating}.

\begin{table}[t!]
  \centering
  \begin{tabular}{lrccccc}
    \hline
    Slicer class & Params & $R^2$ & RMSE & $\max|e|$ & $\max\|\nabla f\|_2$ & ratio \\
    \hline
    $1$-Lipschitz network  & 3{,}152{,}897 & 0.9999 & 0.0070 & 0.025 & 0.995 & 1.00 \\
    RFF (frozen)           &       4{,}097 & 0.7454 & 0.3147 & 0.957 & 0.530 & 0.53 \\
    Fourier (learnable)    &      16{,}385 & 0.9672 & 0.1129 & 0.337 & 0.998 & 1.00 \\
    \hline
  \end{tabular}
  \caption{The three slicer classes fit to the plus-shaped SDF under identical
    training conditions, each at the capacity at which its own family
    saturates. ``Params'' counts trainable parameters,
    $\max|e|$ is the largest pointwise error over the domain, and ``ratio'' is
    $\max\|\nabla f\|_2$ divided by the class's certified Lipschitz bound, which
    equals $1$ for all three: it measures how much of the available Lipschitz
    budget the trained slicer actually uses. The network and the learnable
    Fourier class use essentially all of theirs; the frozen RFF class saturates
    its certificate while realizing barely half of it, which is why it
    under-fits regardless of how many features it is given.}
  \label{tab:slicer-comparison}
\end{table}

\begin{figure}[t!]
  \centering
  \includegraphics[width=\linewidth]{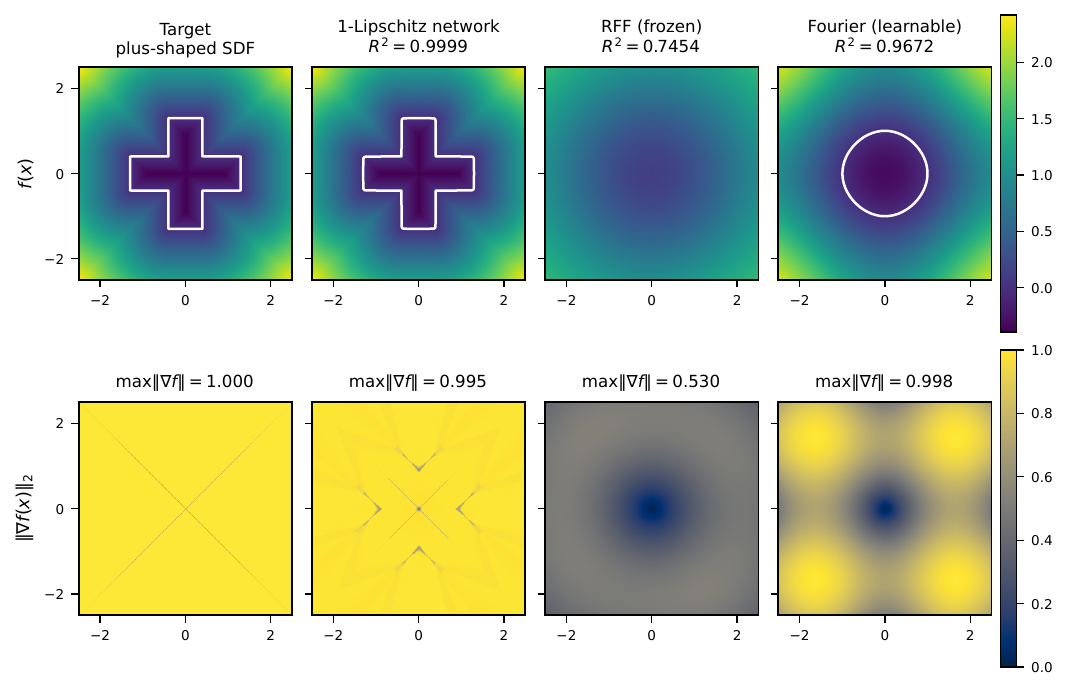}
  \vspace{-0.2in}
  \caption{The three candidate slicer classes on an exactly $1$-Lipschitz
    target. \textbf{Top:} the plus-shaped SDF and the function recovered by
    each class, on a common colour scale, with the zero level set in white.
    \textbf{Bottom:} the corresponding gradient-norm fields $\|\nabla f\|_2$,
    with the colour scale capped at $1$, so that any panel which is not
    saturated lies strictly inside the constraint and the colour itself reports
    the unused Lipschitz budget. The target has $\|\nabla f\|_2=1$ almost
    everywhere. Frozen random features are strongly over-smoothed and reach
    only $\max\|\nabla f\|_2=0.530$, leaving half the budget unused; training the
    frequencies raises this to $0.998$, so the learnable class spends its whole
    budget, but a finite sum of smooth cosines still cannot form a crease and
    its zero level set is a circle where the target's is a plus. The GroupSort
    network recovers the shape and attains a visually uniform unit gradient
    norm, undershooting only in a thin neighborhood of the medial-axis creases,
    where the target is non-differentiable and any continuous network must.}
  \label{fig:slicer-comparison}
\end{figure}

\section{Experiments}
\label{sec:experiments}

Although the preceding results hold for arbitrary measures in $\cP_1(\bbR^d)$, the numerical
experiments below use finite empirical measures, so that $W_1$, $L(f)$ and $U(f)$ can be evaluated exactly by finite-dimensional optimization. This is a
specialization of the evaluation procedure, not of the theory.

We now illustrate Proposition~\ref{prop:KR_simultaneous_no_uniqueness} empirically. The
theory makes a sharp prediction: if the slicer class $\mathcal{F}_1$ contains an
optimal KR potential $\phi^*$, then a single learned slicer
$f_\theta \approx \phi^*$ should drive the max-sliced lower objective $L(f_\theta)$
\emph{up} to $W_1(\mu,\nu)$ and the min-sliced upper objective
$U(f_\theta)$ \emph{down} to the same value, closing the sandwich
$L^* \le W_1 \le U^*$ from both sides at once. We test this on four
two-dimensional configurations of increasing difficulty.

\paragraph{Setup} In each experiment $\mu=\sum_{i=1}^M \alpha_i\delta_{x_i}$ and $\nu=\sum_{i=1}^N \beta_j \delta_{y_j}$ (where $\alpha_i, \beta_j\geq 0$, $\sum_{i=1}^M \alpha_i = 1 = \sum_{j=1}^N \beta_j$) are empirical measures on
$\mathbb{R}^2$. We parameterize $\mathcal{F}_1$ by the exact $1$-Lipschitz
GroupSort architecture of Section~\ref{sec:lip_nets}, here at three hidden
layers of width $4096$ with group size $32$ and $32$ power iterations per step:
the slicer is fit to a handful of points rather than to a field, so capacity is
cheap and is not the quantity under study. We train $f_\theta$ by maximizing
the KR dual objective $J_{\mathrm{KR}}(f_\theta) = \sum_i \alpha_i f_\theta(x_i) - \sum_j \beta_j f_\theta(y_j)$
with Adam for $2048$ iterations, at a step size warmed up geometrically over the
first $256$ and cosine-annealed thereafter. Throughout
training we monitor four scalars: (i) the KR dual objective $J_{\mathrm{KR}}$ that is being
optimized; (ii) the one-dimensional sliced distance $W_1(f_{\theta\#}\mu,
f_{\theta\#}\nu)$, computed by sorted matching and \emph{not} backpropagated;
(iii) the tie-broken upper evaluation $U(f_\theta)$; and (iv) the true
ambient distance $W_1(\mu,\nu)$, computed once by linear programming with
\texttt{ot.emd2} and drawn as a fixed reference line.

\paragraph{Computing the tie-broken upper objective} Evaluating
$U(f_\theta)$
is the lexicographic program of equation~\eqref{eq:def_U_tilde}: first minimize
the slice cost $D^{(f_\theta)}$, then minimize the ambient cost $C$ among
slice-optimal couplings. We realize this with a single linear program on the
perturbed cost $D^{(f_\theta)} + \epsilon\, C$ with $\epsilon = 10^{-6}$. 
We show in Appendix~\ref{sec:app:costpert} that perturbing the cost in this way is a consistent approximation.

\paragraph{Results}
Figure~\ref{fig:kr-experiments} reports the four configurations, one per row.
Each row shows (a) the learned map $f_\theta$ over the plane with the two point
clouds overlaid, (b) the one-dimensional pushforwards $f_{\theta\#}\mu$ and
$f_{\theta\#}\nu$, and (c) the four monitored quantities against the iteration
count, with the interval $[L(f_\theta), U(f_\theta)]$ they certify around $W_1$
shaded.

The qualitative behavior predicted by Proposition~\ref{prop:KR_simultaneous_no_uniqueness} is
visible across all four. The tie-broken upper curve $U(f_\theta)$
(green) descends onto the LP reference for $W_1$ (black dashed) and stays there,
the descent reflecting an upper bound that starts loose at initialization and is
pulled to tightness as $f_\theta \to \phi^*$. That descent is a staircase rather
than a curve, and a brief one. It is a staircase because $U(f_\theta)$ is the
ambient cost of whichever coupling the tie-broken program returns, so it is
piecewise constant in $\theta$ and moves only when the slice-optimal coupling
itself changes. It is brief because at a constant step size it is over within
roughly ten iterations; the warmup above spreads that same descent across about
$150$ without changing the values at convergence, and panel~(c) uses a
logarithmic iteration axis so that it is legible at all. In the
separable, overlapping, and radial
configurations (rows~1, 2, and 4 of Fig.~\ref{fig:kr-experiments}) all four
quantities coincide on $W_1$ at convergence, realizing the exact collapse
$L(\phi^*) = W_1 = U(\phi^*)$.

The radial case (row~4 of Fig.~\ref{fig:kr-experiments}) is the most direct
confirmation, because here the optimal KR potential is known in closed form: for
two concentric circles the radial function $\phi^*(x) = \|x\|_2$ is optimal, and the
learned $f_\theta$ recovers it: panel (a) shows concentric level sets, and panel
(b) collapses each pushforward to a near-atom at the corresponding radius, so the
one-dimensional problem reduces to moving mass between two points and
$U$, $L$, and $W_1$ agree at the difference of radii.

\paragraph{When the slicer class is not expressive enough} The interleaved-spiral
configuration (row~3 of Fig.~\ref{fig:kr-experiments}) is the exception that
confirms the scope of the theory. The two lower curves (the KR dual and the sliced distance) plateau
just below the LP reference and do not reach it within the training budget,
while $U(f_\theta)$ still descends onto $W_1$. This is precisely the
regime described in our scope discussion (Remark~\ref{rem:slicer_class}): an
exactly $1$-Lipschitz network of fixed capacity need not contain an optimal KR
potential for a strongly entangled geometry, so $L^*$ remains a valid but not
tight lower bound. The gap is an expressivity limitation of the finite network,
not a failure of the identity; it narrows as the model is enlarged, consistent
with the universal-approximation guarantees for \textsc{GroupSort} networks
invoked in Section~\ref{sec:lip_nets}. The asymmetry is itself
informative: the upper identity $U(\phi^*) = W_1$ is attained through
the tie-broken lift and is robust to the slicer being only approximately optimal,
whereas the lower identity $L(\phi^*) = W_1$ inherits the full approximation
burden of realizing $\phi^*$.

\begin{figure}[t!]
  \centering
  \includegraphics[width=\linewidth]{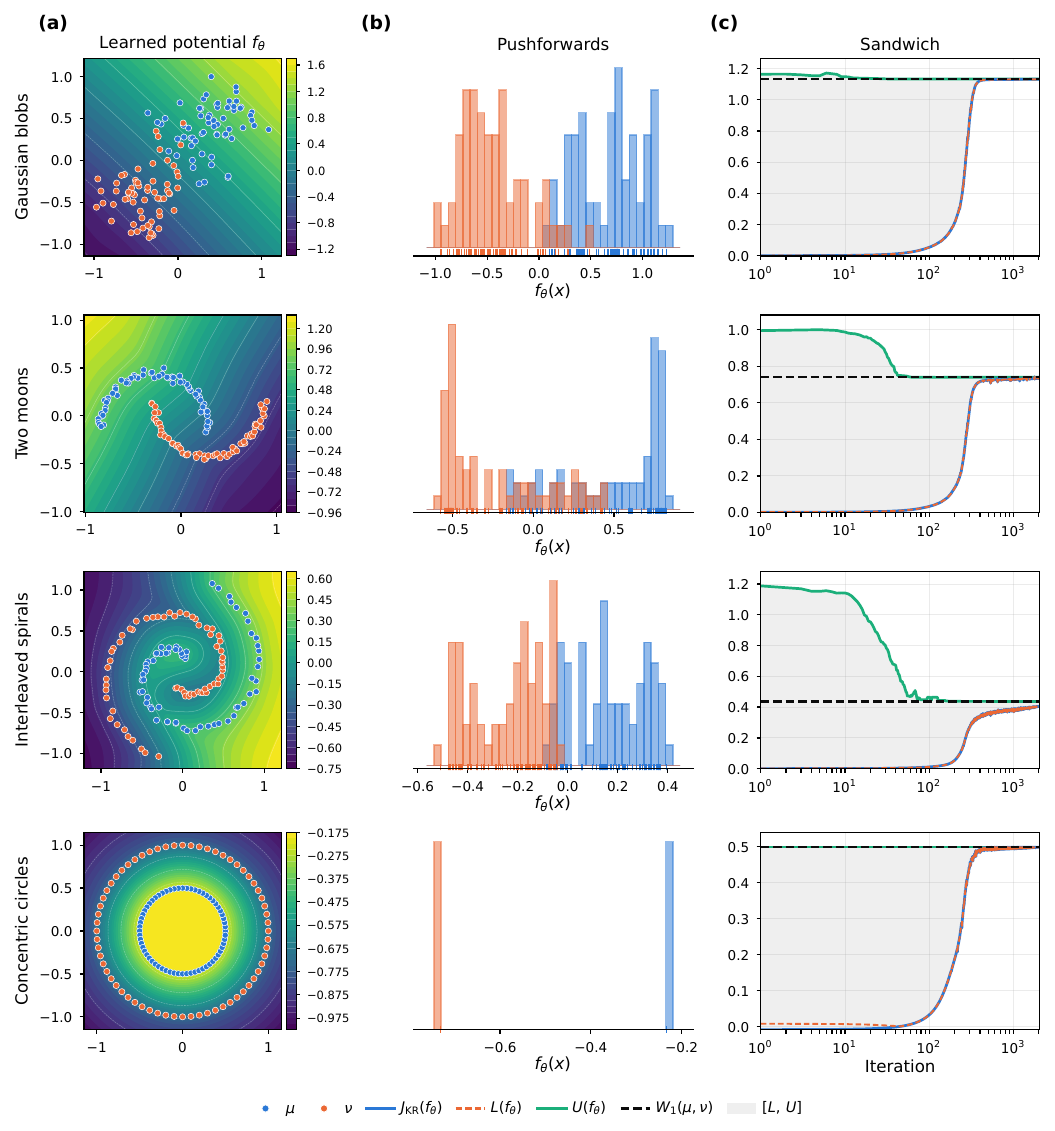}
 \vspace{-0.2in}\caption{Empirical illustration of
    Proposition~\ref{prop:KR_simultaneous_no_uniqueness} on four
    configurations, one per row: (a) the learned $1$-Lipschitz potential
    $f_\theta$ with $\mu$ and $\nu$ overlaid; (b) the induced pushforwards;
    (c) the monitored quantities, with the certified interval
    $[L(f_\theta), U(f_\theta)]$ shaded and a logarithmic iteration axis.
    Section~\ref{sec:experiments} discusses the rows.}
 \label{fig:kr-experiments}
\end{figure}

\paragraph{Why the KR dual and the sliced distance coincide} A feature visible
in panel (c) in Fig. \ref{fig:kr-experiments} of every figure is that the optimized KR dual objective (blue) and
the monitored one-dimensional sliced distance (orange) track each other almost
exactly throughout training, despite the latter never being back-propagated. This
is not a coincidence but the numerical signature of the chain in
equation~\eqref{eq:every_coupling_slice_lower}. Once $f_\theta$ approaches an optimal
potential it \emph{sign-separates} the pushforwards: panel (b) in Fig. \ref{fig:kr-experiments} shows
$f_{\theta\#}\mu$ supported to the left of $f_{\theta\#}\nu$ with little overlap.
On such configurations every pair matched by the one-dimensional optimal coupling
is displaced in the same direction, so $|f_\theta(x_i) - f_\theta(y_j)|$ unfolds
to $f_\theta(x_i) - f_\theta(y_j)$ along the matching, the sorted-matching cost
telescopes, and the sliced distance equals
$\sum_i \alpha_i f_\theta(x_i) - \sum_j \beta_j f_\theta(y_j)$, the KR dual value.
In other words the inequality in~\eqref{eq:every_coupling_slice_lower} closes to an equality
exactly when the slicer sign-separates the slices, which an optimal $\phi^*$ does;
the coincident curves are the lower identity manifesting in finite samples. We
emphasize that this equality is special to sign-separating slicers and is not an
algebraic identity for arbitrary $f$.

\subsection{Exact slicers need not be optimal potentials}
\label{sec:factorization}

The characterization of Theorem~\ref{thm: characterization} asks for a one-dimensional
$1$-Lipschitz $h$ with $h \circ f$ an optimal ambient KR potential, which is
strictly weaker than asking $f$ to be one itself. The distinction is easy to
miss, because on the configurations of Section~\ref{sec:experiments} the
learned slicer happens to be optimal on its own. It is not a technicality: the
restricted objective $J_{\mathrm{KR}}(f) = \mathbb{E}_\mu f - \mathbb{E}_\nu f$
can be blind to a slicer that is exact.

\paragraph{An exact slicer with zero KR value} Take
$\mu = \tfrac12(\delta_{(-2,0)} + \delta_{(2,0)})$ and
$\nu = \tfrac12(\delta_{(-1,0)} + \delta_{(1,0)})$ with the linear slicer
$f(x) = x_1$. Then $W_1(\mu,\nu) = L(f) = U(f) = 1$, so the sandwich is closed
and $f$ is exact, while $J_{\mathrm{KR}}(f) = 0$ exactly (the two atoms of each
measure are symmetric about the origin and cancel in the mean). On the line the
optimal potential is available in closed form (integrating by parts, the
supremum over $\mathrm{Lip}(h) \le 1$ is attained by
$h'(t) = \operatorname{sign}(F_{f_\#\nu}(t) - F_{f_\#\mu}(t))$, so $h_f$ is
piecewise linear with slopes in $\{-1,0,1\}$ and knots at the support) and
recovering it gives $h_f(t) = |t|$ on the support, with
$J_{\mathrm{KR}}(h_f \circ f) = W_1$. Figure~\ref{fig:factorization} shows the
three objects. Numerically $|J_{\mathrm{KR}}(f)|/W_1$, $|L(f)-W_1|/W_1$ and
$|U(f)-W_1|/W_1$ all vanish to double precision, and
$\mathrm{Lip}(h_f) = 1$.

\begin{figure}[t!]
  \centering
  \includegraphics[width=\linewidth]{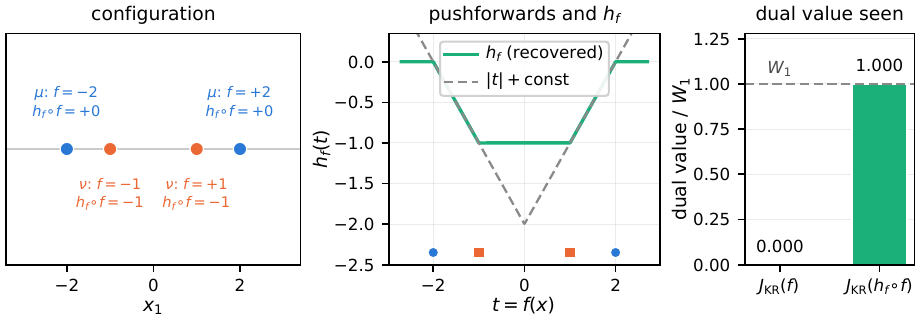}
  \vspace{-0.2in}
  \caption{An exact slicer that is not an optimal potential. Left: the four
    atoms, each labelled with $f$ and with $h_f \circ f$; $f$ takes opposite
    signs on the two atoms of each measure and cancels in the mean, while
    $h_f \circ f$ takes the same value on both and does not. Middle: the
    pushforwards and the recovered $h_f$, which agrees with $|t|$ on the
    support and is flat between the inner atoms, where every slope in $[-1,1]$
    is equally optimal. Right: the dual value each objective sees.}
  \label{fig:factorization}
\end{figure}

\paragraph{Does the distinction appear in training?} Maximizing $L(f)$ directly
rather than $J_{\mathrm{KR}}(f)$ is what the characterization calls for, and
the two can be compared with everything else held fixed---architecture,
initialization, optimizer, schedule, budget and data. Table~\ref{tab:objective-comparison}
reports medians and interquartile ranges over ten seeds. Because $L$ is
invariant to $f \mapsto -f$ while $J_{\mathrm{KR}}$ is not, seeds trained on
$L$ split between the two orientations and we report $|J_{\mathrm{KR}}|$.

On the separable, overlapping and radial configurations the two objectives are
indistinguishable and $|J_{\mathrm{KR}}|/L = 1.000$: the learned slicer is
already an optimal potential and there is nothing for $h_f$ to do. On the
interleaved spirals they separate. Maximizing $L$ returns slicers with
$L/W_1 = 0.813$ but $|J_{\mathrm{KR}}|/L = 0.086$, in ten seeds out of ten---%
informative scalar representations that the restricted objective scores at
essentially zero, and for which $J_{\mathrm{KR}}(h_f \circ f) = L(f)$ recovers
the whole value. The phenomenon of the four-point example is therefore not an
artifact of that construction. It comes at a price in optimization: the sliced
objective has a rougher landscape, and its sandwich gap is both wider
($0.264$ against $0.074$) and far more variable across seeds.

\begin{table}[t!]
  \centering
  \begin{tabular}{llcccc}
      \hline
      Configuration & Trained on & $|J_{\mathrm{KR}}|/W_1$ & $L/W_1$ & $|J_{\mathrm{KR}}|/L$ & $(U-L)/W_1$ \\
      \hline
      Gaussian blobs         & $J_{\mathrm{KR}}$    & 0.998 & 0.998 & 1.000 & 0.003 {\scriptsize [0.002, 0.003]} \\
                             & $L$                  & 0.998 & 0.998 & 1.000 & 0.003 {\scriptsize [0.003, 0.003]} \\
      \hline
      Two moons              & $J_{\mathrm{KR}}$    & 0.992 & 0.992 & 1.000 & 0.008 {\scriptsize [0.008, 0.009]} \\
                             & $L$                  & 0.991 & 0.991 & 1.000 & 0.009 {\scriptsize [0.009, 0.009]} \\
      \hline
      Interleaved spirals    & $J_{\mathrm{KR}}$    & 0.926 & 0.926 & 1.000 & 0.074 {\scriptsize [0.073, 0.074]} \\
                             & $L$                  & 0.070 & 0.813 & 0.086 & 0.264 {\scriptsize [0.259, 0.443]} \\
      \hline
      Concentric circles     & $J_{\mathrm{KR}}$    & 0.997 & 0.997 & 1.000 & 0.003 {\scriptsize [0.003, 0.003]} \\
                             & $L$                  & 0.997 & 0.997 & 1.000 & 0.003 {\scriptsize [0.003, 0.003]} \\
      \hline
    \end{tabular}
  \caption{Maximizing the restricted objective $J_{\mathrm{KR}}(f)$ against maximizing $L(f)$ directly, on the four configurations of 
  Figure \ref{fig:kr-experiments}.
  Medians over ten seeds, with the interquartile range of the sandwich gap in brackets; all quantities are evaluated on the certified path and normalized by $W_1(\mu,\nu)$. $|J_{\mathrm{KR}}|/L = 1$ says the learned slicer is itself an optimal potential. Only the entangled configuration separates the two objectives.}
  \label{tab:objective-comparison}
\end{table}

\paragraph{What the remaining slack is made of} For any ambient-optimal
$\gamma^\star$ the shortfall splits exactly,
\begin{equation}
  W_1 - L(f)
  = \underbrace{\langle \gamma^\star,\, C - D^{(f)} \rangle}_{\text{Lipschitz saturation}}
  + \underbrace{\bigl( \langle \gamma^\star,\, D^{(f)} \rangle - L(f) \bigr)}_{\text{slice optimality}},
  \label{eq:calibration-split}
\end{equation}
and both terms are nonnegative: the first because $D^{(f)} \le C$ pointwise for
$1$-Lipschitz $f$, the second because $L(f)$ minimizes
$\langle \gamma, D^{(f)} \rangle$ over couplings while $\gamma^\star$ is merely
feasible. Reading them together is what makes the statement operational: the
sandwich closes exactly when the ambient-optimal plans become slice-optimal
\emph{and} the Lipschitz inequality is saturated along their transported pairs.
Figure~\ref{fig:calibration} tracks the split. Trained on $J_{\mathrm{KR}}$ the
slice-optimality term vanishes and the entire residual is saturation; trained
on $L$ both stay open, which is the same non-potential solution the table
records. On these configurations the ambient optimum is a unique vertex, so the
split is well defined; the degeneracy that makes tie-breaking necessary for
$U(f)$ lives in the slice-optimal face $\Gamma_f$, not in the ambient one.

\begin{figure}[t!]
  \centering
 \includegraphics[width=0.9\linewidth]{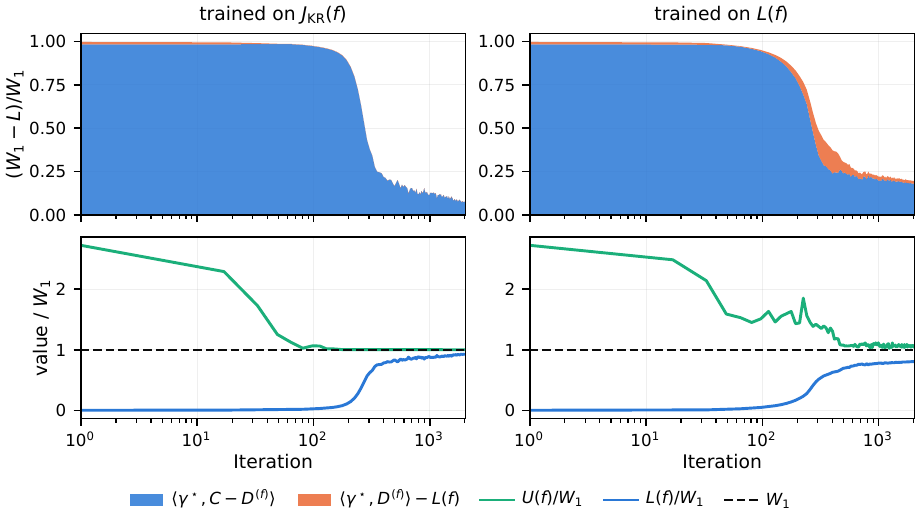}
\vspace{-0.25in}
  \caption{The shortfall $W_1 - L(f)$ on the interleaved spirals, decomposed by
    \eqref{eq:calibration-split} as a stacked area, with the sandwich itself
    below. The two terms sum to the total at every checkpoint to double
    precision.}
  \label{fig:calibration}
\end{figure}

\section{Conclusion and Discussion}

By working with \(1\)-Lipschitz
scalar slicers \(f\), we have placed max-sliced lower bounds  
and lifted min-sliced upper
bounds for \(W_1\) in a common framework. That is,
for an arbitrary pair of measures $\mu,\nu\in\mathcal{P}_1(\mathbb R^d)$, 
on the one hand, we maximize the lower bounds  
$L(f)=W_1(f_\#\mu,f_\#\nu)\leq W_1(\mu,\nu)$
over a slicer family and, on the other hand, we optimize the upper bounds $W_1(\mu,\nu)\leq \min_{\gamma\in \Gamma_f}\langle\gamma,C\rangle=U(f)$ defined lexicographically by minimizing the ambient
cost over the set of slice-optimal couplings. 
Our main characterization shows that the sandwich closes precisely
when an optimal ambient KR potential $\phi^*$ factors through the slicer as
\(\phi^*=h_f\circ f\), with \(h_f:\mathbb R\to\mathbb R\) \(1\)-Lipschitz. In
particular, every optimal KR potential is itself an exact generalized
slicer and simultaneously attains both sides of the sandwich.

The analysis also clarifies the role of the two bounds. For instance, the equality \(U(f)=W_1(\mu,\nu)\) alone is weak, as it may already
hold for constants or highly compressed slicers. The informative
condition is instead the closure \(L(f)=U(f)\), or equivalently the
factorization property. Our experiments illustrate the benefit
of expressive \(1\)-Lipschitz slicer classes, including the possible
necessity of nonlinear slicing, and the distinction between directly learning an optimal KR potential and learning a scalar slicer \(f\) through which an optimal KR potential factors as \(h_f\circ f\). 

Natural
next steps include quantitative stability results for approximate
factorizations and scalable methods for closing the sandwich within
restricted slicer classes.





\appendix

\section{Perturbation of Costs}\label{sec:app:costpert}

In this section, we first justify replacing the bilevel optimization problem \eqref{eq:def_U_tilde}
with
\begin{equation}\label{eq: perturbed}
    \min_{\gamma\in\Gamma(\mu,\nu)} \langle \gamma, D^{(f)}_\eps\rangle, \qquad \text{where} \qquad D_\eps^{(f)}(x,y) := |f(x)-f(y)| + \eps \|x-y\|_2,
\end{equation} 
as it selects the lexicographic solution in the limit
\(\epsilon\downarrow0\).
\begin{theorem}
Let $\mu,\nu\in\cP_1(\bbR^d)$ and let $f:\bbR^d\to \bbR$ be \(1\)-Lipschitz.
Then
\[ \lim_{\eps\to 0} \langle \gamma_\eps^*, C\rangle = \min_{\gamma\in \displaystyle \underset{\eta\in\Gamma(\mu,\nu)}{\argmin}\langle \eta,D^{(f)}\rangle}\langle \gamma,C\rangle, \]
where $\gamma^*_\eps\in\Gamma(\mu,\nu)$ is any minimiser of $\gamma\mapsto \langle \gamma,D_\eps^{(f)}\rangle$ over $\gamma\in\Gamma(\mu,\nu)$.
Moreover, $\{\gamma_\eps^*\}_{\eps>0}$ is precompact in the weak$^*$ topology and any convergent subsequence, say $\gamma_\eps^*\weakstarto \gamma^*$, satisfies $\gamma^*\in\Gamma(\mu,\nu)$, $\gamma^*\in\argmin_{\eta\in\Gamma(\mu,\nu)} \langle \eta, D^{(f)}\rangle$ and $\langle\gamma^*,C\rangle \leq \langle \bar{\gamma}, C\rangle$ for all $\bar{\gamma}\in\argmin_{\eta\in\Gamma(\mu,\nu)} \langle \eta,D^{(f)}\rangle$, i.e.,
\[ \langle \gamma^*,C\rangle = \min_{\gamma\in \displaystyle \underset{\eta\in\Gamma(\mu,\nu)}{\argmin}\langle \eta,D^{(f)}\rangle}\langle \gamma,C\rangle. \]
\end{theorem}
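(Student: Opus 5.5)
This is a standard $\Gamma$-convergence / lexicographic-penalty argument; I would run it directly through two comparison inequalities rather than through abstract $\Gamma$-limits. Write $L\coloneqq L(f)=\min_{\Gamma(\mu,\nu)}\langle\eta,D^{(f)}\rangle$ and $U\coloneqq U(f)$, which is attained by the discussion after \eqref{eq:def_U_tilde}. Fix $\bar\gamma\in\Gamma_f$ with $\langle\bar\gamma,C\rangle=U$. Note that $U\le\langle\gamma,C\rangle\le\int\|x\|_2\,\dd\mu+\int\|y\|_2\,\dd\nu<\infty$ for every coupling, so all quantities below are finite.

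\textbf{Step 1 (comparison inequalities).} By minimality of $\gamma^*_\eps$ for $D^{(f)}_\eps$, tested against $\bar\gamma$,
\[
\langle\gamma^*_\eps,D^{(f)}\rangle+\eps\langle\gamma^*_\eps,C\rangle\le L+\eps U .
\]
Since $\langle\gamma^*_\eps,D^{(f)}\rangle\ge L$, subtracting gives $\langle\gamma^*_\eps,C\rangle\le U$ for every $\eps>0$. It also gives $0\le\langle\gamma^*_\eps,D^{(f)}\rangle-L\le\eps\bigl(U-\langle\gamma^*_\eps,C\rangle\bigr)\le\eps U\to0$. This yields $\limsup_{\eps\to0}\langle\gamma^*_\eps,C\rangle\le U$ and asymptotic slice-optimality.

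\textbf{Step 2 (compactness).} $\Gamma(\mu,\nu)$ is tight because its marginals are fixed and tight, and it is weakly closed. By Prokhorov, $\{\gamma^*_\eps\}$ is precompact and every limit $\gamma^*$ lies in $\Gamma(\mu,\nu)$.

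\textbf{Step 3 (limit is slice-optimal).} This is the main technical point, since $D^{(f)}$ is continuous but unbounded and weak convergence only tests bounded continuous functions. I would use lower semicontinuity instead of continuity. For nonnegative continuous $g$, $\gamma\mapsto\langle\gamma,g\rangle$ is weakly l.s.c.: write it as the supremum of the values on the truncations $\min(g,k)$. Hence
\[
\langle\gamma^*,D^{(f)}\rangle\le\liminf\langle\gamma^*_\eps,D^{(f)}\rangle=L ,
\]
so $\gamma^*\in\Gamma_f$.

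\textbf{Step 4 (limit attains $U$).} By the same l.s.c. applied to $C$, together with Step 1,
\[
\langle\gamma^*,C\rangle\le\liminf\langle\gamma^*_\eps,C\rangle\le\limsup\langle\gamma^*_\eps,C\rangle\le U .
\]
Since $\gamma^*\in\Gamma_f$, we also have $\langle\gamma^*,C\rangle\ge U$. Hence $\langle\gamma^*,C\rangle=U\le\langle\bar\gamma',C\rangle$ for every $\bar\gamma'\in\Gamma_f$, and the subsequential limit of the costs equals $U$.

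\textbf{Step 5 (full limit).} Every sequence $\eps_n\to0$ has a subsequence along which, by Steps 2–4, $\langle\gamma^*_{\eps_n},C\rangle\to U$. Therefore the whole family converges, $\lim_{\eps\to0}\langle\gamma^*_\eps,C\rangle=U$.

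I expect no real obstacle beyond the unboundedness of the costs, and Step 3 handles it by using only lower semicontinuity in the limit together with the upper bound from Step 1. This is why convergence of the costs, and not merely of the plans, follows.
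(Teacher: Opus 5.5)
Your proof is correct and uses the same ingredients as the paper's: Prokhorov compactness, testing the minimality of $\gamma^*_\eps$ against a slice-optimal plan, and lower semicontinuity of $\gamma\mapsto\langle\gamma,g\rangle$ for nonnegative continuous $g$. It is packaged more economically: your single comparison with a lexicographic minimizer replaces the paper's $\Gamma$-liminf/limsup step and its two later comparisons (with an arbitrary $\bar\gamma\in\Gamma_f$ and with $\gamma^*$), yields the explicit rate $\langle\gamma^*_\eps,D^{(f)}\rangle-L\le\eps U$, and your Step~5 spells out the subsequence argument for the full limit that the paper leaves implicit. One slip: $U\le\langle\gamma,C\rangle$ holds only for $\gamma\in\Gamma_f$, not for every coupling. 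You only use it for finiteness of $U$, which follows directly from $U=\langle\bar\gamma,C\rangle\le\int\|x\|_2\,\dd\mu+\int\|y\|_2\,\dd\nu$.
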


\begin{proof}
For convenience we introduce the following notation
\begin{align*}
\cE_\eps(\gamma) &  = \langle \gamma, D_\eps^{(f)}\rangle = \int_{\bbR^d\times \bbR^d} |f(x)-f(y)| + \eps \|x-y\|_2 \, \dd \gamma(x,y) \\
\cE_0(\gamma) & = \langle \gamma, D^{(f)}\rangle = \int_{\bbR^d\times \bbR^d} |f(x)-f(y)| \, \dd \gamma(x,y).
\end{align*}
\emph{Compactness:} 
Let $\gamma_\eps^*\in \Gamma(\mu,\nu)$ be a minimiser of $\cE_\eps$.
Then since $\{\gamma_\eps^*\}_{\eps>0}$ is tight\footnote{Let $\delta>0$ and choose a compact set $K_\delta$ such that $\mu(K_\delta^c)\leq \frac{\delta}{2}$, $\nu(K_\delta^c)\leq \frac{\delta}{2}$, then $\gamma_\eps^*((K_\delta\times K_\delta)^c) \leq \gamma^*_\eps(K_\delta^c \times \bbR^d) + \gamma_\eps^*(\bbR^d\times K_\delta^c) = \mu(K_\delta^c) + \nu(K_\delta^c) \leq \delta$, so $\{\gamma_\eps^*\}_{\eps>0}$ is tight.} then by Prokhorov's theorem $\{\gamma_\eps^*\}_{\eps>0}$ is sequentially compact in the weak$^*$ convergence.
Let $\gamma_\eps^*\weakstarto \gamma^*$ be an arbitrary converging subsequence (which we relabel).
Marginals are conserved with weak$^*$ convergence, so $\gamma^*\in\Gamma(\mu,\nu)$.

\noindent \emph{Liminf:}
Trivially,
\[ \cE_\eps(\gamma_\eps^*) \geq \int_{\bbR^d\times \bbR^d} |f(x)-f(y)| \, \dd \gamma_\eps^*(x,y). \]
Since $f$ is continuous,
$(x,y)\mapsto |f(x)-f(y)|$ is lower semicontinuous and bounded
from below; hence, by the Portmanteau theorem,
\[ \liminf_{\eps\to 0} \cE_\eps(\gamma_\eps^*) \geq \liminf_{\eps\to 0} \int_{\bbR^d\times \bbR^d} |f(x)-f(y)| \, \dd \gamma^*_\eps(x,y) \geq \int_{\bbR^d\times \bbR^d} |f(x)-f(y) |\, \dd \gamma^*(x,y) = \cE_0(\gamma^*). \]
\emph{Limsup:}
Let $\gamma\in\Gamma(\mu,\nu)$ be arbitrary.
Note that
\begin{small}
 \[ \int_{\bbR^d\times\bbR^d} \|x-y\|_2 \, \dd \gamma(x,y) \leq \int_{\bbR^d\times \bbR^d} \|x\|_2 + \|y\|_2 \, \dd \gamma(x,y) \leq \int_{\bbR^d} \|x\|_2 \, \dd \mu(x) + \int_{\bbR^d} \|y\|_2 \, \dd \nu(y) =:M < +\infty. \]   
\end{small}

\noindent We have,
\[ \cE_\eps(\gamma) = \int_{\bbR^d\times\bbR^d} |f(x)-f(y)| + \eps \|x-y\|_2 \, \dd \gamma(x,y) = \cE_0(\gamma) + \eps\underbrace{\int_{\bbR^d\times\bbR^d} \|x-y\|_2 \, \dd \gamma(x,y)}_{\leq M} \to \cE_0(\gamma) \]
as $\eps\to 0$.

\noindent\emph{Convergence of minimisers.}
Let $\gamma\in\Gamma(\mu,\nu)$, by the limsup part of the proof $\cE_\eps(\gamma)\to\cE_0(\gamma)$.
Combining with the liminf part of the proof implies 
\[ \cE_0(\gamma) = \lim_{\eps\to 0} \cE_\eps(\gamma) \geq \limsup_{\eps\to 0} \cE_\eps(\gamma_\eps^*) \geq \liminf_{\eps\to 0} \cE_\eps(\gamma_\eps^*) \geq \cE_0(\gamma^*) \geq \min_{\gamma\in\Gamma(\mu,\nu)} \cE_0(\gamma). \]
Taking the minimum over $\gamma\in\Gamma(\mu,\nu)$ gives
\[ \min_{\gamma\in\Gamma(\mu,\nu)} \cE_0(\gamma) \geq \limsup_{\eps\to 0} \cE_\eps(\gamma_\eps^*) \geq \liminf_{\eps\to 0} \cE_\eps(\gamma_\eps^*) \geq \cE_0(\gamma^*) \geq \min_{\gamma\in\Gamma(\mu,\nu)} \cE_0(\gamma). \]
Hence,
\[ \cE_0(\gamma^*) = \lim_{\eps\to 0} \cE_\eps(\gamma_\eps^*) = \min_{\gamma\in\Gamma(\mu,\nu)} \cE_0(\gamma) \]
and $\gamma^*$ is a minimiser of $\cE_0$.
In other words, $\gamma^*\in\argmin_{\eta\in\Gamma(\mu,\nu)} \langle \eta,D^{(f)}\rangle$.

Now let $\bar{\gamma}\in\Gamma(\mu,\nu)$ be any other minimiser of $\cE_0$.
We have,
\[ \eps \langle \bar{\gamma},C\rangle + \underbrace{\langle \bar{\gamma},D^{(f)}\rangle}_{=\cE_0(\bar{\gamma}) = \min_{\eta\in\Gamma(\mu,\nu)}\langle\eta,D^{(f)}\rangle} = \cE_\eps(\bar{\gamma}) \geq \cE_\eps(\gamma_\eps^*) = \eps\langle\gamma^*_\eps,C\rangle + \langle \gamma_\eps^*,D^{(f)}\rangle. \]
Rearranging this we have
\[ \eps \langle \bar{\gamma},C\rangle \geq \eps\langle\gamma^*_\eps,C\rangle + \underbrace{\langle \gamma_\eps^*,D^{(f)}\rangle - \min_{\eta\in\Gamma(\mu,\nu)}\langle\eta,D^{(f)}\rangle}_{\geq 0} \geq \eps\langle\gamma^*_\eps,C\rangle. \]
So, $\langle\bar{\gamma},C\rangle \geq \langle\gamma^*_\eps,C\rangle$.
Let $C_M(x,y) = \min\{\|x-y\|_2,M\}$, so $C_M$ is Lipschitz, bounded and $C_M\leq C$.
For fixed $M>0$,
\[ \langle \bar{\gamma},C\rangle \geq \langle \gamma_\eps^*, C\rangle \geq \langle\gamma_\eps^*,C_M\rangle \to \langle \gamma^*,C_M\rangle \]
as $\eps\to 0$.
Letting $M\to\infty$ and applying the monotone convergence theorem we have $\langle\bar{\gamma},C\rangle \geq \langle\gamma^*,C\rangle$.
Therefore,
\[ \langle \gamma^*,C\rangle = \min_{\gamma\in \displaystyle \underset{\eta\in\Gamma(\mu,\nu)}{\argmin}\langle \eta,D^{(f)}\rangle}\langle \gamma,C\rangle. \]

We can also write
\[ \eps\langle \gamma_\eps^*, C\rangle + \langle \gamma_\eps^*, D^{(f)}\rangle = \cE_\eps(\gamma_\eps^*) \leq \cE_\eps(\gamma^*) = \eps\langle \gamma^*,C\rangle + \langle \gamma^*,D^{(f)}\rangle. \]
Hence,
\[
\epsilon\langle\gamma_\epsilon^*,C\rangle
+
\left(
\langle\gamma_\epsilon^*,D^{(f)}\rangle
-
\langle\gamma^*,D^{(f)}\rangle
\right)
\leq
\epsilon\langle\gamma^*,C\rangle.
\]
Since $\gamma^*$ minimizes the sliced cost, the term in
parentheses is nonnegative. 
So $\langle \gamma_\eps^*,C\rangle \leq \langle \gamma^*,C\rangle$.
On the other hand, since $C$ is lower semi-continuous and
bounded from below, the Portmanteau theorem implies $\liminf_{\eps\to 0}\langle \gamma_\eps^*, C\rangle \geq \langle \gamma^*,C\rangle$.
Putting this together implies,
\[ \langle \gamma^*,C\rangle \geq \limsup_{\eps\to 0} \langle \gamma_\eps^*,C\rangle \geq \liminf_{\eps\to 0} \langle \gamma_\eps^*,C\rangle \geq \langle \gamma^*, C\rangle. \]
In particular, $\langle \gamma_\eps^*,C\rangle \to \langle \gamma^*, C\rangle$.
\end{proof}

\begin{remark}
In the liminf part of the proof we never used that $\gamma_\eps^*$ is a minimising sequence.
In particular, the proof shows that $\liminf_{\eps\to 0} \cE_\eps(\gamma_\eps) \geq \cE_0(\gamma)$ for all $\gamma_\eps\weakstarto \gamma$.
Combined with the limsup part of the proof this is (by definition) a $\Gamma$-convergence, with respect to the weak$^*$ topology. As a result $\Glim_{\eps\to 0} \cE_\eps= \cE_0$.
\end{remark}

\paragraph{A fixed perturbation is not exact in general}

For a fixed $\epsilon>0$, the perturbed linear program \eqref{eq: perturbed} is not automatically equivalent to the
exact lexicographic problem. 

We recall that for a fixed slicer $f$, the first-stage
problem is
\[
L(f)
=
\min_{\gamma\in\Gamma(\mu,\nu)}
\langle D^{(f)},\gamma\rangle.
\]
The tie-broken upper objective is then defined by the second-stage
problem
\[
U(f)
=
\min_{\gamma\in\Gamma(\mu,\nu)}
\left\{
\langle C,\gamma\rangle
:
\langle D^{(f)},\gamma\rangle=L(f)
\right\}.
\]
Equivalently, one first restricts to the face of slice-optimal
couplings $\Gamma_f$ and then minimizes the ambient cost over that face.

The perturbed formulation \eqref{eq: perturbed} instead computes
\begin{equation}\label{eq:LP_perturbed}
    \gamma_\epsilon
\in
\operatorname*{arg\,min}_{\gamma\in\Gamma(\mu,\nu)}
\left\{
\langle D^{(f)},\gamma\rangle
+
\epsilon\langle C,\gamma\rangle
\right\}.
\end{equation}
For a fixed finite-dimensional problem, the formulation \eqref{eq:LP_perturbed} is
guaranteed to recover the lexicographic solution only when $\epsilon$
is below an instance-dependent threshold. In particular, there is no
universal guarantee that a fixed $\epsilon>0$ is sufficiently small,
especially when $f_\theta$ changes during training and the gap between
the smallest and second-smallest sliced costs may approach zero.

As a concrete counterexample, let
\begin{equation}\label{eq: counterexample}
    \begin{cases}
       \mu=\tfrac12\left(\delta_{x_1}+\delta_{x_2}\right), & \text{ with } x_1=(0,0), \
x_2=(1,\delta), \ \\
\nu=\tfrac12\left(\delta_{y_1}+\delta_{y_2}\right), &
\text{ with }  
y_1=(1,0),
\
y_2=(0,\delta). 
    \end{cases}
\end{equation}
Consider the $1$-Lipschitz slicer $f(z)=z_2$
and take $\delta=5\times10^{-7}$.
There are two permutation couplings, whose sliced and ambient costs
are
\[
\begin{array}{c|c|c}
\text{Coupling}
&
\text{Sliced cost}
&
\text{Ambient cost}
\\ \hline
x_1\mapsto y_1,\quad x_2\mapsto y_2
&
0
&
1
\\
x_1\mapsto y_2,\quad x_2\mapsto y_1
&
\delta
&
\delta
\end{array}
\]
The exact lexicographic problem must select the first coupling because
its sliced cost is exactly minimal. Consequently,
$L(f)=0$,
$U(f)=1$.

Now take $\epsilon=10^{-6}$. The perturbed objective value in \eqref{eq:LP_perturbed} of the
first coupling is $0+\epsilon(1)=10^{-6}$,
whereas for the second coupling is $\delta+\epsilon\delta
=
5\times10^{-7}+5\times10^{-13}
\approx 5\times10^{-7}$.
Therefore, the perturbed linear program \eqref{eq:LP_perturbed} selects the second coupling,
even though it is not slice-optimal. Its ambient cost is
$5\times10^{-7}$, while the exact tie-broken value is $U(f)=1$.

More precisely, in this example the perturbed formulation selects the
lexicographic solution only if
\[
\epsilon
<
\frac{\delta}{1-\delta}
\approx 5\times10^{-7}.
\]
Thus, the fixed choice $\epsilon=10^{-6}$ is already too large.

For the radial slicer in the concentric-circle example in Section \ref{sec:experiments}, the
perturbation \eqref{eq:LP_perturbed} does recover the exact tie-broken solution because every
feasible coupling has exactly the same sliced cost. In that special
case, the perturbed problem reduces to minimizing the ambient cost, which is a special degeneracy.

\paragraph{Exact two-stage evaluation of $U(f)$ vs. the perturbed version} The discussion above shows
that a fixed $\epsilon$ cannot be relied on \eqref{eq:LP_perturbed} in general. 

To compute the lexicographic program for $U(f)$ numerically, we perform as follows: Stage one solves
$L(f) = \min_{\gamma \in \Gamma(\mu,\nu)} \langle D^{(f)}, \gamma \rangle$ by
network simplex. Stage two solves
\[
  U(f) \;=\; \min_{\gamma \in \Gamma(\mu,\nu)} \langle C, \gamma \rangle
  \qquad \text{subject to} \qquad
  \langle D^{(f)}, \gamma \rangle = L(f),
\]
as a linear program in which the primary optimum is imposed as an equality
constraint, so no perturbation parameter enters at any point.
Both cost matrices are rescaled to unit
maximum before the second solve. A solver's feasibility tolerance is absolute,
so on a configuration whose sliced costs are all of order $10^{-9}$ while its
ambient costs are of order $1$, the constraint $\langle D^{(f)},\gamma\rangle =
L(f)$ falls entirely inside the default tolerance and is satisfied by couplings
that are not slice-optimal at all; the solver then minimizes the ambient cost
over the whole transport polytope and returns a value far below $U(f)$. After
rescaling, the constraint is commensurate with the marginal constraints, and
the tolerance becomes relative. 
We set the primal and dual feasibility tolerances to \(10^{-10}\). Every evaluation reports the realized primary residual
$r_{\mathrm{primary}} = |\langle D^{(f)}, \gamma_U \rangle - L(f)|$, together with the marginal residuals, the smallest
entry of $\gamma_U$, and a check that $L(f) \le W_1 \le U(f)$. 
See Table~\ref{tab:upper-audit}.

As a numerical diagnostic,
with the
exact evaluator available, the estimate \eqref{eq:LP_perturbed} used in Figure \ref{fig:kr-experiments} can be checked: Table~\ref{tab:upper-audit}
recomputes $U$ at every recorded checkpoint of every experiment in the paper by
both routes. The two agree to at worst $3\times10^{-12}$ in relative terms and
select the \emph{same} coupling at every of the 64 checkpoint.
The perturbation is therefore adequate on these configurations (as an
empirical fact about them, not as a property of the construction).
\\

\begin{table}[ht!]
  \centering
  \small
  \begin{tabular}{llcccc}
    \hline
    Experiment & Objective &  $\max|U_\epsilon - U|$ & relative error &
    plans differ & $r_{\mathrm{primary}}$ \\
    \hline
    Gaussian blobs       & $J_{\mathrm{KR}}$ & $6.0\times10^{-15}$  & $5.3\times10^{-15}$  & 0\% & $2.7\times10^{-15}$ \\
                         & $L$               & $3.6\times10^{-15}$  & $3.1\times10^{-15}$  & 0\% & $1.6\times10^{-15}$ \\
    Two moons            & $J_{\mathrm{KR}}$  & $1.2\times10^{-14}$  & $1.7\times10^{-14}$  & 0\% & $5.6\times10^{-16}$ \\
                         & $L$                 & $1.1\times10^{-14}$  & $1.4\times10^{-14}$  & 0\% & $1.9\times10^{-15}$ \\
    Interleaved spirals  & $J_{\mathrm{KR}}$ & $5.9\times10^{-15}$  & $1.4\times10^{-14}$  & 0\% & $4.7\times10^{-15}$ \\
                         & $L$                & $1.4\times10^{-12}$  & $3.2\times10^{-12}$  & 0\% & $1.5\times10^{-14}$ \\
    Concentric circles   & $J_{\mathrm{KR}}$  & $2.2\times10^{-16}$  & $4.4\times10^{-16}$  & 0\% & $1.3\times10^{-16}$ \\
                         & $L$                & $0$                  & $0$                  & 0\% & $0$ \\
    \hline
  \end{tabular}
  \caption{The perturbed evaluation \eqref{eq:LP_perturbed} denoted by $U_\epsilon$ with
    $\epsilon = 5\times10^{-6}$ against the exact two-stage program $U$, at every
    recorded checkpoint (64) of every experiment in the paper; ``plans differ'' is
    the fraction of checkpoints at which the two routes return different
    couplings.}
  \label{tab:upper-audit}
\end{table}

Finally, we notice that the bound $\epsilon < \delta/(1-\delta)$
derived above is a statement about one pair of parameters, and it can be tested
over both. Figure~\ref{fig:perturbation-phase} evaluates the 
example \eqref{eq: counterexample} over $\delta \in \{10^{-9},\dots,10^{-2}\}$ and
$\epsilon \in \{10^{-10},\dots,10^{-2}\}$ and marks the cells where the
perturbed program \eqref{eq:LP_perturbed} returns a coupling that is not slice-optimal. It fails in
$29$ of the $72$ cells, and the measured pattern agrees with the analytic
boundary in $71$ of $72$; the single exception is $\delta = \epsilon = 10^{-8}$,
where the two sides of the comparison differ by only \(10^{-16}\) in absolute value, effectively producing a floating-point tie.

\begin{figure}[ht!]
  \centering
  \includegraphics[width=0.6\linewidth]{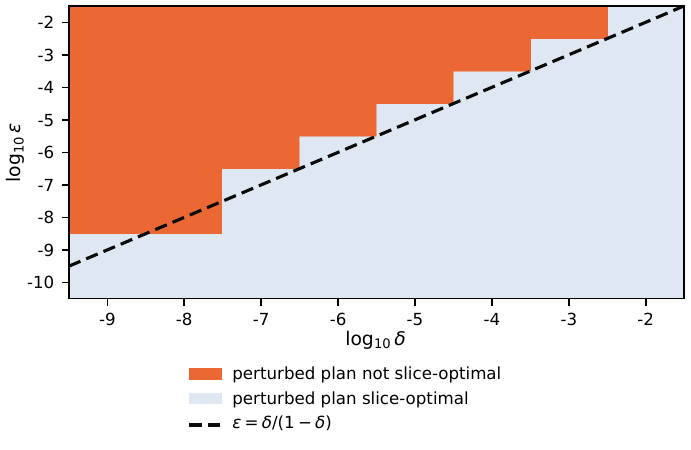}
  \vspace{-0.2in}
  \caption{Fixed perturbation optimization \eqref{eq: perturbed} (equivalently, \eqref{eq:LP_perturbed}), on the
    two-coupling example \eqref{eq: counterexample}. Shaded cells are those in which the perturbed program
    returns a coupling that is not slice-optimal; the dashed line is the
    analytic boundary $\epsilon = \delta/(1-\delta)$.}
  \label{fig:perturbation-phase}
\end{figure}

\section{Nonlinear Necessity}
\label{sec:app-nonlinear}

This subsection reports a quantitative study of when a nonlinear slicer is
actually needed. Every value of $U$ below is computed using the exact two-stage program, while every reported value of \(L\) is evaluated after certifying the slicer's Lipschitz constant using exact singular values computed by SVD. Every
evaluation was checked to satisfy $L \le W_1 \le U$; across the $740$ runs
reported here the largest primary residual is $2.0\times10^{-14}$.

\paragraph{A configuration with an analytic linear gap} Let $z_i \in
\mathbb{S}^{d-1}$ be shared directions and put $\mu = \frac1n\sum \delta_{R
z_i}$, $\nu = \frac1n \sum \delta_{r z_i}$. The radial matching is optimal, so
$W_1 = R - r$, and the radial slicer $f(x) = \|x\|_2$ closes the sandwich in
every dimension $d$. A linear slicer $f_v(x)=v^\top x$ cannot:  Both pushforwards are the same
scalars $t_i = \langle v, z_i\rangle$ scaled by $R$ and by $r$, so sorting
preserves the pairing and
\[
  \frac{L(f_v)}{W_1} \;=\; \frac1n \sum_i |\langle v, z_i\rangle|
  \;\xrightarrow[n\to\infty]{}\;
  c_d := \frac{\Gamma(d/2)}{\sqrt{\pi}\,\Gamma((d+1)/2)}
  \;\sim\; \sqrt{\tfrac{2}{\pi d}} .
\]
This is an analytic identity, not an approximation, so the optimized empirical direction
can be compared against a closed-form reference. 

Maximizing $\frac1n\sum_i
|\langle v, z_i\rangle|$ over the unit ball is a convex maximization; we use
the subgradient fixed point $v \mapsto \mathrm{normalize}(\sum_i
\mathrm{sign}(\langle v,z_i\rangle) z_i)$ from $32$ restarts.

Table~\ref{tab:sphere-linear} reports the optimized linear slicer over
$d \in \{2,\dots,128\}$, $n \in \{128,512,2048\}$, ten direction samples
each. The empirical value exceeds $c_d$ by a margin that grows with $d$ at
fixed $n$ and shrinks with $n$ at fixed $d$
and converges onto the analytic curve as $n$
grows. The oracle radial slicer closes the sandwich to a worst normalized gap
of $6.4\times10^{-13}$ over all $420$ runs, and $W_1 = R-r$ exactly in every
cell.

\begin{table}[ht!]
  \centering
  \small
  \begin{tabular}{rrccc}
    \hline
    $d$ & $c_d$ & $n=128$ & $n=512$ & $n=2048$ \\
    \hline
      2 & 0.6366 & 0.6790 ($\times1.07$) & 0.6502 ($\times1.02$) & 0.6481 ($\times1.02$) \\
      4 & 0.4244 & 0.4775 ($\times1.12$) & 0.4526 ($\times1.07$) & 0.4372 ($\times1.03$) \\
      8 & 0.2910 & 0.3626 ($\times1.25$) & 0.3256 ($\times1.12$) & 0.3086 ($\times1.06$) \\
     16 & 0.2026 & 0.2746 ($\times1.36$) & 0.2386 ($\times1.18$) & 0.2207 ($\times1.09$) \\
     32 & 0.1422 & 0.2176 ($\times1.53$) & 0.1800 ($\times1.27$) & 0.1612 ($\times1.13$) \\
     64 & 0.1001 & 0.1776 ($\times1.77$) & 0.1374 ($\times1.37$) & 0.1196 ($\times1.19$) \\
    128 & 0.0707 & 0.1457 ($\times2.06$) & 0.1086 ($\times1.54$) & 0.0900 ($\times1.27$) \\
    \hline
  \end{tabular}
  \caption{Optimized linear slicer on concentric spheres: median $L/W_1$ over
    ten direction samples, with the ratio to the analytic $c_d$ in parentheses.
    The ratio decreases monotonically in $n$ at every dimension.}
  \label{tab:sphere-linear}
\end{table}

\paragraph{Learned classes} Table~\ref{tab:sphere-classes} places the learned
classes between the two ends, at $n = 512$ over five seeds with a matched
budget of $2048$ iterations. Both GroupSort variants track the oracle far more
closely than any other class, and their advantage over the linear bound widens
with dimension: $1.6\times$, $3.1\times$, $6.1\times$ and $11.7\times$ at
$d = 2, 8, 32, 128$. The frozen random-feature class is \emph{worse} than
linear at $d=2$ and only overtakes it by $d=8$. The two training objectives are
indistinguishable here, unlike on the interleaved spirals: this geometry admits
no exact slicer that fails to be a potential, so there is nothing for direct
$L$-training to find.

Finally, the last column in Table \ref{tab:sphere-classes} shows that the upper bound is exactly $1$ for \emph{every} class at
\emph{every} dimension, including the linear slicer at $d = 128$ whose lower
bound has collapsed to $0.07$. This is a sharp form of one of the observations in Section \ref{sec:slice_sandwich}: $U(f) = W_1$ can hold for a slicer that has retained
almost none of the geometry, and only $L(f) = U(f)$ is informative.

\begin{table}[ht!]
  \centering
  \small
  \begin{tabular}{lccccc}
    \hline
    Class & $d=2$ & $d=8$ & $d=32$ & $d=128$ & $U/W_1$ \\
    \hline
    linear (analytic $c_d$)   & 0.6366 & 0.2910 & 0.1422 & 0.0707 & 1.0000 \\
    RFF (frozen)              & 0.4777 & 0.3839 & 0.3365 & 0.3017 & 1.0000 \\
    Fourier (learnable)       & 0.7701 & 0.6139 & 0.5691 & 0.5513 & 1.0000 \\
    GroupSort, $J_{\mathrm{KR}}$ & 0.9875 & 0.9113 & 0.8692 & 0.8263 & 1.0000 \\
    GroupSort, $L$            & 0.9868 & 0.9130 & 0.8679 & 0.8265 & 1.0000 \\
    \hline
  \end{tabular}
  \caption{Median $L/W_1$ over five seeds on concentric spheres at $n = 512$,
    with $U/W_1$ in the last column (identical for every class and dimension).
    Parameter counts and median runtimes: $2{,}049$ / $7$\,s for frozen
    features, $45{,}057$ / $15$\,s for learnable Fourier, $536{,}577$ /
    $40$\,s for GroupSort.}
  \label{tab:sphere-classes}
\end{table}

\section{Additional Experiment using RFF Slicers} \label{sec:app:exp-RFF}

We revisit the random Fourier feature (RFF) slicer class
$f_a(x)=a^\top z_{\Omega,b}(x)$ introduced in Section~\ref{sec:lip_nets}, in
which the frequencies $\omega_k$ and phases $b_k$ are sampled once and kept
fixed while only the coefficients $a\in\mathbb R^m$ are optimized, subject to
$\|a\|_2\leq\kappa_\Omega^{-1}$ with
$\kappa_\Omega=\sqrt{2/m}\,\|\Omega\|_{\mathrm{op}}$ as in
\eqref{eq:rff_lipschitz}, so that every admissible slicer is $1$-Lipschitz.
Section~\ref{sec:lip_nets} compared this class against a $1$-Lipschitz network
on a fixed target; here we use it instead to probe the factorization mechanism
of Theorem~\ref{thm: characterization}, by fitting the same class with two
different objectives.

For the same realization of $(\Omega,b)$, we fit the coefficients
using either the KR objective
\[
a_J\in
\mathrm{arg}\max_{\|a\|_2\leq\kappa_\Omega^{-1}}
J_{\mathrm{KR}}(f_a),
\qquad
J_{\mathrm{KR}}(f)
=
\mathbb E_\mu[f]-\mathbb E_\nu[f],
\]
or the sliced objective
\[
a_L\in
\mathrm{arg}\max_{\|a\|_2\leq\kappa_\Omega^{-1}}
L(f_a)
=
\mathrm{arg}\max_{\|a\|_2\leq\kappa_\Omega^{-1}}
W_1\bigl((f_a)_\#\mu,(f_a)_\#\nu\bigr).
\]
For a fixed feature dimension \(m=256\) and a single realization of
\((\Omega,b)\), Figure~\ref{fig:RFF} compares the normalized values
\(L(f_a)/W_1(\mu,\nu)\) and
\(|J_{\mathrm{KR}}(f_a)|/W_1(\mu,\nu)\) for the slicers obtained by optimizing the two respective objectives.
The purpose of this experiment is to determine whether a simple,
computationally inexpensive nonlinear slicer class can approximately
close the Wasserstein sandwich. The comparison between the two
objectives also tests the factorization mechanism of Theorem \ref{thm: characterization}:
maximizing $J_{\mathrm{KR}}(f)$ asks the slicer itself to approximate
an oriented KR potential, corresponding to the fixed outer function
$h(t)=t$ (i.e., we look for the direction $a_J$ that creates the largest difference of means of the scalar outputs), whereas maximizing $L(f_a)$ only asks $f_{a_L}$ to produce an
informative scalar representation, allowing an optimal
one-dimensional potential $h$ to complete the factorization
$h\circ f_{a_L}$ (i.e., the $L$-objective does not require $f_{a_L}$ itself to distinguish the two measures by a mean shift, instead it only requires 
$f_{a_L}$ to produce a scalar representation from which some $1$-Lipschitz $h$ can distinguish them).

\begin{figure}[ht!]
    \centering
    \includegraphics[width=0.9\linewidth]{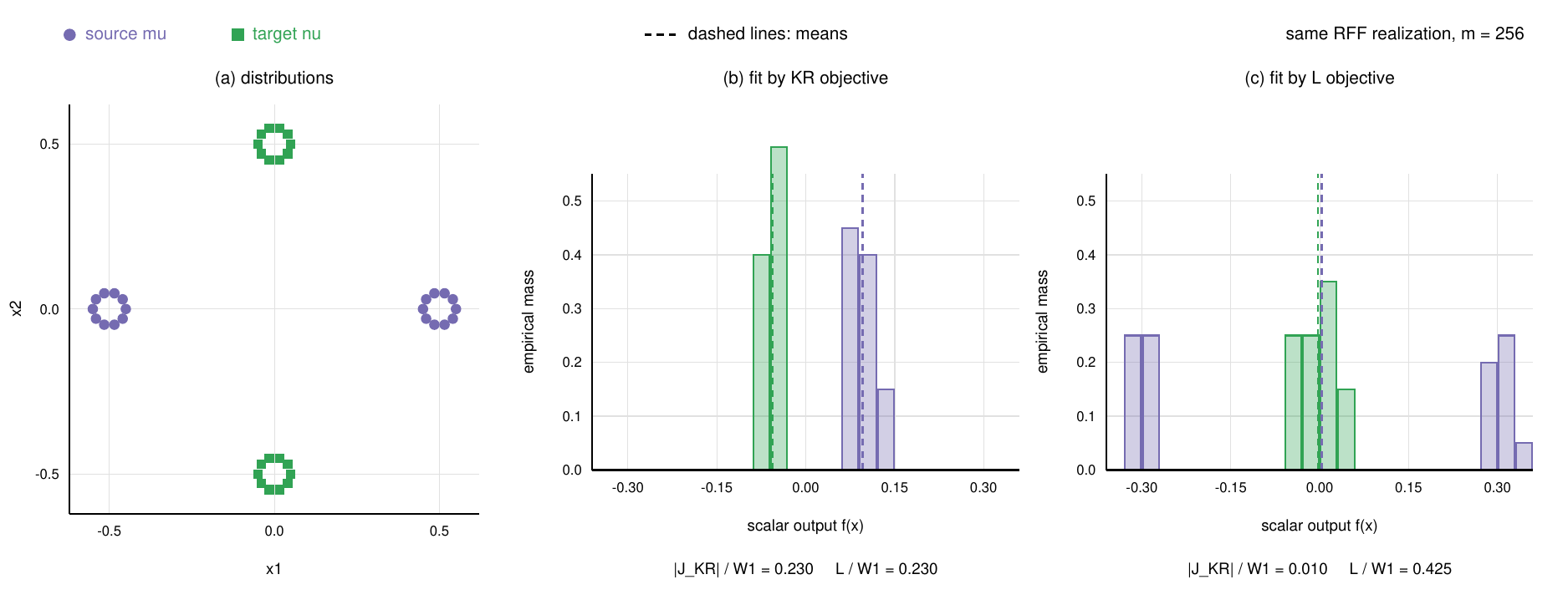}
    \vspace{-0.25in}
    \caption{
Direct comparison of RFF slicers trained with different objectives. Panel (a) shows the original two-dimensional
distributions. Panels (b) and (c) show histograms of the
one-dimensional scalar outputs $f(x)$ for the same RFF realization
with $m=256$; dashed lines indicate their means. Under KR training,
the two output distributions are separated by a mean shift, giving
$|J_{\mathrm{KR}}|/W_1=L/W_1=0.230$. 
Under $L$ training, the outputs
of $\mu$ form two outer groups while those of $\nu$ remain near zero.
The mean difference is therefore small,
$|J_{\mathrm{KR}}|/W_1=0.010$, although the complete distributions
are more strongly separated, with $L/W_1=0.425$. Hence, in this experiment, optimizing $L$
produces a more informative scalar representation for distinguishing $\mu$ and $\nu$ than optimizing
$J_{\mathrm{KR}}$ (in the sense that $f_{a_L}$ has encoded the distinction in the ``shape'' of the one-dimensional distributions rather than in their means).
}
    \label{fig:RFF}
\end{figure}

\section*{Acknowledgments}
The authors thank Vanderbilt University and Professor Akram Aldroubi for organizing the 9th International Conference on Computational Harmonic Analysis, where the initial ideas for this paper were developed. SK acknowledges funding from NSF CAREER Award \#2339898. RDM and SK acknowledge funding from NSF DMS Award \#2603773. MT acknowledges the support of Leverhulme Trust Research through the Project Award ``Robust Learning: Uncertainty Quantification, Sensitivity and Stability'' (grant agreement RPG-2024-051), the NHSBT award 177PATH25 ``Harnessing Computational Genomics to Optimise Blood Transfusion Safety and Efficacy'', the EPSRC Mathematical and Foundations of Artificial Intelligence Probabilistic AI Hub (grant agreement EP/Y007174/1), and the EPSRC-JST award ``Statistical Safeguarding: A Japan-UK Collaboration Towards the Responsible Data-driven Learning Paradigm''.

\bibliographystyle{plain}
\bibliography{sample}

\end{document}